\newtheorem{theorem}{Theorem}[section]
\newtheorem{lemma}[theorem]{Lemma}
\newtheorem{corollary}[theorem]{Corollary}
\newtheorem{question}[theorem]{Question}
\theoremstyle{remark}
\theoremstyle{definition}
\newtheorem{definition}[theorem]{Definition}
\newtheorem{example}[theorem]{Example}
\begin{document}

\title[On $\mathcal{I}$-covering images of metric spaces]
{On $\mathcal{I}$-covering images of metric spaces}

\author{Xiangeng Zhou}
\address{Xiangeng Zhou:  Department of Mathematics, Ningde Normal University, Ningde, Fujian 352100,
  P.R. China}
\email{56667400@qq.com}

\author{Shou Lin*}\thanks{*The corresponding author.}
\address{(Shou Lin): Institute of Mathematics,
Ningde  Normal University, Ningde, Fujian 352100, P.R. China}
\email{shoulin60@163.com}

\keywords{Ideal; $\mathcal{I}$-convergence; $\mathcal{I}$-$cs$-network; $\mathcal{I}$-covering mapping; $\mathcal{I}$-sequential space; $\mathcal{I}$-FU-space.}
\subjclass[2010]{54A20; 54B15; 54C08; 54C10; 54D55; 54E20; 54E40; 54E99.}
\thanks{The project is supported by the NSFC (No. 12171015) and NSF of Fujian Province, China (No. 2020J01428, 2020J05230).}
\begin{abstract}
Let $\mathcal{I}$ be an ideal on $\mathbb{N}$. A mapping $f:X\to Y$ is called an $\mathcal{I}$-covering mapping provided a sequence $\{y_{n}\}_{n\in\mathbb N}$ is $\mathcal{I}$-converging to a point $y$ in $Y$, there is a sequence $\{x_{n}\}_{n\in\mathbb N}$ converging to a point $x$ in $X$ such that $x\in f^{-1}(y)$ and each $x_n\in f^{-1}(y_n)$. In this paper we study the spaces with certain $\mathcal{I}$-$cs$-networks and investigate the characterization of the images of metric spaces under certain $\mathcal{I}$-covering mappings, which prompts us to discover $\mathcal{I}$-$csf$-networks. The following main results are obtained:

(1)\,  A space $X$ has an $\mathcal{I}$-$csf$-network if and only if $X$ is a continuous and $\mathcal{I}$-covering image of a metric space.

(2)\, A space $X$ is an $\mathcal{I}$-$csf$-countable space if and only if $X$ is a continuous $\mathcal{I}$-covering and boundary $s$-image of a metric space.

(3)\,  A space $X$ has a point-countable $\mathcal{I}$-$cs$-network if and only if $X$ is a continuous $\mathcal{I}$-covering and $s$-image of a metric space.
\end{abstract}

\maketitle
\section{Introduction}
We know that mappings are an important tool to study spaces, and they play a pivotal role in discussing various images of metric spaces \cite{Ar66,LsYzq16}. Sequence-covering mappings are a special kind of mappings \cite{Sf71}. For example, every metric space is preserved by a continuous, sequence-covering and closed mapping \cite[Corollary 3.5.12]{Ls15}. Through sequence-covering mappings, we can establish the relationship between convergence sequences in topological spaces, and further reveal some topological properties based on convergent sequences \cite{Ls15}.

Convergence of sequences in a topological space is a basic and important concept in mathematics \cite{E, LsYzq16}. In addition to the usual convergence of sequences, statistical convergence \cite{{ab1},{ab6},RePr17,TangL}, ideal convergence \cite{{ba5},{ba7},ZZ21,{zl2020}} and $G$-convergence \cite{{ca2011},{Lin2006},MuCa18} have attracted extensive attention. In particular, ideals are a very useful notion in topology, analysis and set theory, and have been studied for along time. In \cite{Lin2020,cbb12}, it was studied that certain topological spaces are defined by ideal convergence.

Through various mappings, we have obtained rich results of ideal convergence. We know that every topological space is a continuous and sequence-covering image of a metric space.

\begin{question}
What topological spaces are characterized by continuous and $\mathcal{I}$-covering images of metric spaces?
\end{question}

In this paper we study the spaces with certain $\mathcal{I}$-$cs$-networks. In particular, the study of Question 1.1  prompts us to introduce the spaces with $\mathcal{I}$-$csf$-networks and $\mathcal{I}$-$csf$-countable spaces, and establish the relationship between these spaces and the images of metric spaces under certain $\mathcal{I}$-covering mappings.
These studies deepen our understanding for ideal topological spaces and mappings, present a version using the notion of ideals and provide a new research path for revealing the mutual relationship of spaces and mappings.

\section{Preliminaries}
In this section, recall some basic concepts related to this paper. The readers may refer to \cite{E, LsYzq16} for notation and terminology not explicitly given here.

By $\mathbb{N}$ we denote the set of positive integers.
Let $\mathcal A$ be the family of all subsets of the set $\mathbb{N}$.
An ${\it ideal}$ $\mathcal{I} \subseteq \mathcal A$ is a hereditary family of subsets of $\mathbb{N}$ which is stable under finite unions, i.e., the following are satisfied: if $B\subset A\in\mathcal{I}$, then $B\in\mathcal{I}$; if $A, B\in\mathcal{I}$, then $A\cup B\in\mathcal{I}$.
An ideal $\mathcal{I}$ is said to be ${\it non}$-${\it trivial}$, if $\mathcal{I}\neq \emptyset$ and $\mathbb{N}\notin\mathcal{I}$.
A non-trivial ideal $\mathcal{I}$ is called ${\it admissible}$ if $\mathcal{I}\supseteq \{\{n\}:n\in\mathbb{N}\}$.
The family of all finite subsets of $\mathbb{N}$ is denoted by $\mathcal{I}_{fin}$. Then $\mathcal{I}_{fin}$ is the smallest non-trivial ideal contained in each admissible ideal. The following, if no otherwise specified, we consider $\mathcal{I}$ is always an admissible ideal on the set $\mathbb{N}$.

\medskip
The concept of $\mathcal{I}$-convergence is a generalization of
the usual convergence in topological spaces. Let $X$ be a topological space and $\tau_{X}$ denote the topology for the space $X$. A sequence $\{x_{n}\}_{n\in\mathbb{N}}$ in $X$ is said to be {\it$\mathcal{I}$-eventually in} a subset $P\subseteq X$ if the set $\{n\in\mathbb{N}: x_{n}\notin P\}\in\mathcal{I}$ \cite[Definition 3.15]{cbb12}. A sequence $\{x_n\}_{n\in\mathbb{N}}$ in $X$ is said to be {\it$\mathcal{I}$-convergent} to a point $x\in X$ provided $\{x_n\}_{n\in\mathbb N}$ is $\mathcal{I}$-eventually in every neighborhood of $x$ in $X$, which is denoted by $x_{n} \xrightarrow {_{\mathcal{I}}} x$, and the point $x$ is called the $\mathcal{I}$-{\it limit point} of the sequence $\{x_n\}_{n\in\mathbb{N}}$. A subset $P$ of $X$ is said to be {\it$\mathcal{I}$-closed} if for each sequence $\{x_n\}_{n\in\mathbb{N}}\subseteq P$ with $x_{n} \xrightarrow {_{\mathcal{I}}} x\in X$, the {\it $\mathcal{I}$-limit point} $x\in P$.
A subset $P$ of $X$ is said to be {\it $\mathcal{I}$-open} if the complement set $X\setminus P$ is $\mathcal{I}$-closed.

\medskip
Let $\mathcal{I}$ be an ideal on $\mathbb{N}$. Let $f:X\to Y$ be a mapping. $f$ is called {\it preserving $\mathcal{I}$-convergence} provided $x_{n} \xrightarrow {_{\mathcal{I}}} x\in X$, then $f(x_{n}) \xrightarrow {_{\mathcal{I}}} f(x)\in Y$ \cite[Theorem 3]{ba2}.
It is easy to check that every continuous mapping preserves $\mathcal{I}$-convergence~\cite[Theorem 4.2]{cbb12}.
One of the mappings corresponding to preserving $\mathcal{I}$-convergence is an $\mathcal{I}$-covering mapping.

\begin{definition}
Let $\mathcal{I}$ be an ideal on $\mathbb{N}$. A mapping $f:X\to Y$ is called {\it $\mathcal{I}$-covering} provided a sequence $y_{n} \xrightarrow {_{\mathcal{I}}} y$ in $Y$, there is a sequence $x_{n} \xrightarrow {_{\mathcal{I}}} x$ in $X$ satisfying $x\in f^{-1}(y)$ and each $x_n\in f^{-1}(y_n)$ \cite[Definition 5.1]{cbb12}.
\end{definition}

The concept of networks has played a key role in the study of topological spaces. A family $\mathcal{P}$ of subsets of a topological space $X$ is called a {\it network} at a point $x\in X$ if $x\in\bigcap\mathcal{P}$ and whenever $x\in U$ with $U$ open in $X$, then $P\subseteq U$ for some $P\in\mathcal {P}$ \cite[Definition 1.5.9]{LsYzq16}.

\begin{definition}\label{d1}
Let $\mathcal{I}$ be an ideal on $\mathbb{N}$, and $\mathcal{P}$ a family of subsets of a topological space $X$.
$\mathcal{P}$ is called an {\it $\mathcal{I}$-cs-network} at a point $x\in X$ if whenever $\{x_n\}_{n\in\mathbb{N}}$ is a sequence $\mathcal{I}$-converging to $x\in U$ with $U$ open in $X$ then $\{x\}\cup\{x_n: n\in\mathbb{N}\setminus I\}\subseteq P \subseteq U$ for some $I\in\mathcal{I}$ and $P\in \mathcal{P}$.
$\mathcal{P}$ is called an {\it $\mathcal{I}$-cs-network} for $X$ if $\mathcal{P}$ is an {\it $\mathcal{I}$-cs-network} at each point $x\in X$ \cite[Definition 4.1]{zl202011}.
Each $\mathcal{I}_{fin}$-\emph{cs}-network is called a {\it $cs$-network} \cite[P.106]{Gu71}.
\end{definition}

It is easy to check that the statement ``$\{x_n: n\in\mathbb{N}\setminus I\}\subseteq P$ for some $I\in\mathcal{I}$'' is equivalent to the statement ``$\{n\in\mathbb N: x_n\not\in P\}\in\mathcal{I}$'' in Definition \ref{d1}, i.e., the sequence $\{x_n\}_{n\in\mathbb N}$ is $\mathcal{I}$-eventually in the set $P$.

\begin{definition}\label{d25}
Suppose that $\mathscr{P}$ is a family of subsets of a $T_1$-space $X$ {\color{black}such that, for each $x\in X$, there is a countable subfamily of $\mathscr{P}$ which is a network at $x$ in $X$}. Let $\mathscr{P}=\{P_{\alpha}:\alpha\in {\it \Lambda}\}$, {\color{black}which is no repetition by indexes in the enumeration.} ${\it \Lambda}$ is endowed with the discrete topology. Put
$$M=\{\alpha=(\alpha_{i})\in {\it \Lambda}^{\omega}:\{P_{\alpha_{i}}\}_{i\in \mathbb{N}}\mbox{~forms~a~network~at~some~point~}x_{\alpha}\mbox{~in~}X\}.$$
$M$ is endowed with the subspace topology of the product space ${\it\Lambda}^{\omega}$, and a function $f:M\rightarrow X$ is defined by $f(\alpha)=x_{\alpha}$ for every $\alpha\in M$. Then $(f,M,X,\mathscr{P})$ is called {\it Ponomarev's system}~\cite[p. 296]{LsYpf03}.
\end{definition}

It is known $M$ is a metric space and the function $f:M\to X$ is continuous and surjective ~\cite[p. 296]{LsYpf03}.
Ponomarev's system is one of the important methods to construct metric spaces and certain mappings on the metric spaces, and it is also a basic {\color{black} tool} to discuss the images of metric spaces under certain mappings \cite{Ls15, LsYzq16}.

\section{On spaces with $\mathcal{I}$-$csf$-networks}

Finding the intrinsic properties of the images of metric spaces under certain mappings are one of the basic topics in the study of topological spaces. In this section we characterized the images of metric spaces under continuous and $\mathcal{I}$-covering mappings. Next, we introduce a special countable network based on the $\mathcal{I}$-$cs$-networks.

\begin{definition}\label{d2}
Let $\mathcal{I}$ be an ideal on $\mathbb{N}$, and $\mathcal{P}$ a family of subsets of a topological space $X$.
$\mathcal{P}$ is called an {\it $\mathcal{I}$-csf-network} at a point $x\in X$ if whenever $\{x_n\}_{n\in\mathbb{N}}$ is a sequence $\mathcal{I}$-converging to $x\in X$, then there is a countable subfamily $\mathcal{P}'$ of $\mathcal{P}$ satisfying provided $x\in U$ with $U$ open in $X$ then $\{x\}\cup\{x_n: n\in\mathbb{N}\setminus I\}\subseteq P \subseteq U$ for some $I\in\mathcal{I}$ and $P\in \mathcal{P}'$.
$\mathcal{P}$ is called an {\it $\mathcal{I}$-csf-network} for $X$ if $\mathcal{P}=\bigcup_{x\in X}\mathcal{P}_x$ and each $\mathcal{P}_x$ is an $\mathcal{I}$-$csf$-network at $x$ in $X$.
\end{definition}

Each $\mathcal{I}_{fin}$-$csf$-network is called a {\it $csf$-network} \cite[Definition 2.5]{Ge05}.
It is known that every $\mathcal{I}$-$cs$-network is preserved by a continuous $\mathcal{I}$-covering mapping \cite[Theorem 4.3]{zl202011}.

\begin{lemma}\label{I-covering-pre}
Every $\mathcal{I}$-$csf$-network is preserved by a continuous $\mathcal{I}$-covering mapping.
\end{lemma}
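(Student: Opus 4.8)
The plan is to mimic the proof structure for $\mathcal{I}$-$cs$-networks (\cite[Theorem 4.3]{zl202011}), adapting it to the local, pointwise character of $\mathcal{I}$-$csf$-networks. Let $f:X\to Y$ be a continuous $\mathcal{I}$-covering mapping, and let $\mathcal{P}=\bigcup_{x\in X}\mathcal{P}_x$ be an $\mathcal{I}$-$csf$-network for $X$, where each $\mathcal{P}_x$ is an $\mathcal{I}$-$csf$-network at $x$. The natural candidate for an $\mathcal{I}$-$csf$-network on $Y$ is $f(\mathcal{P})=\bigcup_{y\in Y}f(\mathcal{P}_y')$, where for each $y\in Y$ we fix a point $x_y\in f^{-1}(y)$ and set $f(\mathcal{P}_y')=\{f(P):P\in\mathcal{P}_{x_y}\}$; so the decomposition indexed by points of $Y$ comes from choosing one preimage per point. (One should check $\mathcal{I}$-$csf$-networks, like $csf$-networks, are not required to be networks, so no extra hypotheses are needed here.)

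First I would fix $y\in Y$ and a sequence $\{y_n\}_{n\in\mathbb{N}}$ with $y_n\xrightarrow{_{\mathcal{I}}}y$. Since $f$ is $\mathcal{I}$-covering, there is a sequence $\{x_n\}_{n\in\mathbb{N}}$ in $X$ with $x_n\xrightarrow{_{\mathcal{I}}}x$, $x\in f^{-1}(y)$, and each $x_n\in f^{-1}(y_n)$. Now the subtlety: the base point $x$ produced by the $\mathcal{I}$-covering property need not be the chosen representative $x_y$. This is exactly the point where the $\mathcal{I}$-$csf$-network definition is more delicate than the $\mathcal{I}$-$cs$-network one, because the countable subfamily $\mathcal{P}'$ that works is attached to $x$ (it is drawn from $\mathcal{P}_x$, or at least it can be taken so), not to $x_y$. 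So I would instead decompose $f(\mathcal{P})$ by $Y$ via: for $y\in Y$, let $\mathcal{P}_y^{Y}$ be the union of $f(\mathcal{P}_x)$ over all $x\in f^{-1}(y)$ — but that union can fail to be countable-per-sequence. The cleaner fix is to observe that Definition \ref{d2} only requires, for each $\mathcal{I}$-convergent sequence to $x$, \emph{some} countable subfamily $\mathcal{P}'\subseteq\mathcal{P}$ (not necessarily $\subseteq\mathcal{P}_x$) to do the job; the decomposition $\mathcal{P}=\bigcup_x\mathcal{P}_x$ is a separate structural condition. Hence I would define $\mathcal{P}_y^{Y}:=\bigcup_{x\in f^{-1}(y)}f(\mathcal{P}_x)$ and verify it is an $\mathcal{I}$-$csf$-network at $y$: given $y_n\xrightarrow{_{\mathcal{I}}}y$, lift to $x_n\xrightarrow{_{\mathcal{I}}}x\in f^{-1}(y)$, take the countable $\mathcal{P}'\subseteq\mathcal{P}_x$ witnessing the $\mathcal{I}$-$csf$-property at $x$ for $\{x_n\}$, and set $\mathcal{P}''=f(\mathcal{P}')$, a countable subfamily of $\mathcal{P}_y^{Y}\subseteq f(\mathcal{P})$.

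Then I would do the routine verification: suppose $y\in U$ with $U$ open in $Y$. By continuity $x\in f^{-1}(U)$, which is open in $X$, so there are $I\in\mathcal{I}$ and $P\in\mathcal{P}'$ with $\{x\}\cup\{x_n:n\in\mathbb{N}\setminus I\}\subseteq P\subseteq f^{-1}(U)$. Applying $f$ gives $\{y\}\cup\{y_n:n\in\mathbb{N}\setminus I\}\subseteq f(P)\subseteq U$ with $f(P)\in\mathcal{P}''$, as required; note $y=f(x)$ and $y_n=f(x_n)$ so the containment survives. Finally $f(\mathcal{P})=\bigcup_{y\in Y}\mathcal{P}_y^{Y}$ gives the required decomposition, so $f(\mathcal{P})$ is an $\mathcal{I}$-$csf$-network for $Y$. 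The main obstacle, as indicated, is bookkeeping around \emph{which} point of the fiber $f^{-1}(y)$ the lifted sequence converges to and making sure the "countable subfamily per sequence" clause of Definition \ref{d2} is used in the form actually stated (pointwise existence of $\mathcal{P}'$) rather than over-reading it as $\mathcal{P}'\subseteq\mathcal{P}_x$; once that is pinned down, everything else is the same diagram-chase as in the $\mathcal{I}$-$cs$-network case.
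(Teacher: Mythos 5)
Your proposal is correct and, after discarding the initial one-representative-per-fiber idea, it arrives at exactly the paper's argument: decompose the image family as $\mathcal{Q}_y=\bigcup_{x\in f^{-1}(y)}f(\mathcal{P}_x)$, lift an $\mathcal{I}$-convergent sequence via the $\mathcal{I}$-covering property to some $x\in f^{-1}(y)$, take the countable witness $\mathcal{P}'\subseteq\mathcal{P}_x$, and push it through $f$ using continuity. The bookkeeping concern you raise (which fiber point the lift converges to) is handled in the paper in the same way, so there is no substantive difference.
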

\begin{proof}
Let $f:X\rightarrow Y$ be a continuous $\mathcal{I}$-covering mapping and $\mathcal{P}$ be an $\mathcal{I}$-$csf$-network for the topological space $X$. Put $\mathcal{P}=\bigcup_{x\in X}\mathcal{P}_x$ in which each $\mathcal{P}_x$ is an $\mathcal{I}$-$csf$-network at $x$ in $X$. Now, put $\mathcal{Q}=\bigcup_{y\in Y}\mathcal{Q}_y$, where each $\mathcal{Q}_y=\{f(P): P\in\mathcal{P}_x, x\in f^{-1}(y)\}$. Then $\mathcal{Q}=f(\mathcal{P})$. Suppose that $\{y_n\}_{n\in\mathbb{N}}$ is a sequence in $Y$, which is $\mathcal{I}$-convergent to a point $y\in Y$. Since $f$ is an $\mathcal{I}$-covering mapping, there exists a sequence $\{x_{n}\}_{n\in\mathbb{N}}$ in $X$ with $x_{n} \xrightarrow {_{\mathcal{I}}} x\in f^{-1}(y)$ such that each $x_{n}\in f^{-1}(y_{n})$. Since $\mathcal{P}_x$ is an $\mathcal{I}$-$csf$-network at $x$ in $X$, there is a countable subfamily $\mathcal{P}'$ of $\mathcal{P}_x$ satisfying the condition in Definition \ref{d2}. Put $\mathcal{Q}'=f(\mathcal{P}')$. Then $\mathcal{Q}'$ is a countable subfamily of $\mathcal{Q}_y$. If $y\in U$ with $U$ open in $Y$,  then $x\in f^{-1}(U)$ and $f^{-1}(U)$ is open in $X$. There are $I\in\mathcal{I}$ and $P\in\mathcal{P}'$ such that $\{x\}\cup\{x_n: n\in\mathbb{N}\setminus I\}\subseteq P \subseteq f^{-1}(U)$, and thus $f(P)\in\mathcal{Q}'$ and $\{y\}\cup\{y_n: n\in\mathbb{N}\setminus I\}\subseteq f(P) \subseteq U$. This means that the family $\mathcal{Q}$ is an $\mathcal{I}$-$csf$-network for $Y$.
\end{proof}

Perhaps, $\mathcal{I}$-covering mappings are one of the most appropriate mappings to adapt to $\mathcal{I}$-convergence in the relationship between spaces and mappings. However, a finite-to-one, continuous and closed mapping on a metric space is not necessarily an $\mathcal{I}$-covering mapping \cite[Example 3.5.17(1)]{Ls15}. The following result provides a technical lemma for deciding $\mathcal{I}$-covering mappings, in which its description is similar to the form of the usual convergence of sequences, but the proof is much more complex.

\begin{lemma}\label{decreasing network}
Let $\mathcal{I}$ be an ideal on $\mathbb{N}$. And let $f:X\rightarrow Y$ be a surjective mapping and $\{y_{i}\}_{i\in\mathbb{N}}$ a sequence $\mathcal{I}$-converging to some point $f(x)$ in $Y$. If $\{B_{n}\}_{n\in\mathbb{N}}$ is a decreasing network at $x$ in $X$ and the sequence $\{y_{i}\}_{i\in\mathbb{N}}$ is $\mathcal{I}$-eventually in $f(B_{n})$ for each $n\in\mathbb{N}$, then there is a sequence $\{x_i\}_{i\in\mathbb{N}}$ $\mathcal{I}$-converging to the point $x$ in $X$ with each $x_{i}\in f^{-1}(y_{i})$.
\end{lemma}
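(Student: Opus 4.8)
The plan is to construct the sequence $\{x_i\}_{i\in\mathbb{N}}$ by assigning to each index $i$ a preimage $x_i\in f^{-1}(y_i)$ that lies in $B_{n}$ for as large an $n$ as the hypothesis permits, and then to verify $\mathcal{I}$-convergence using the network property of $\{B_n\}_{n\in\mathbb{N}}$ together with closure of $\mathcal{I}$ under finite unions. First I would record the data supplied by the hypothesis: for each $n\in\mathbb{N}$ the set $I_n:=\{i\in\mathbb{N}: y_i\notin f(B_n)\}$ belongs to $\mathcal{I}$, and since $\{B_n\}$ is decreasing we have $f(B_{n+1})\subseteq f(B_n)$, hence $I_1\subseteq I_2\subseteq\cdots$ is an increasing chain in $\mathcal{I}$. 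For an index $i\notin I_n$ we know $y_i\in f(B_n)$, so there is a point of $B_n$ mapping to $y_i$.

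Next I would define the assignment. For each $i\in\mathbb{N}$ let $n(i)=\sup\{n\in\mathbb{N}: i\notin I_n\}\in\mathbb{N}\cup\{\infty\}$ (with the convention that the sup is $0$ if $i\in I_1$). If $n(i)\ge 1$, pick $x_i\in B_{n(i)}\cap f^{-1}(y_i)$ when $n(i)$ is finite, or $x_i\in B_{i}\cap f^{-1}(y_i)$ when $n(i)=\infty$; if $n(i)=0$ (equivalently $i\in I_1$), pick $x_i\in f^{-1}(y_i)$ arbitrarily, which is possible since $f$ is surjective. The point of this bookkeeping is that $x_i$ lands in $B_m$ whenever $i\notin I_m$ and $i\ge m$: indeed in that case $n(i)\ge m$, and either $x_i\in B_{n(i)}\subseteq B_m$ or $x_i\in B_i\subseteq B_m$ by the decreasing property.

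Now I would verify $x_i\xrightarrow{_{\mathcal{I}}}x$. Let $U$ be an open neighborhood of $x$ in $X$. Since $\{B_n\}_{n\in\mathbb{N}}$ is a network at $x$, there is $m\in\mathbb{N}$ with $B_m\subseteq U$. The set of indices $i$ for which $x_i\notin U$ is contained in $\{i: x_i\notin B_m\}$, which by the previous paragraph is contained in $I_m\cup\{1,2,\dots,m-1\}$. Since $I_m\in\mathcal{I}$ and $\mathcal{I}$ is admissible (hence contains all finite sets and is closed under finite unions), this set lies in $\mathcal{I}$, so $\{i: x_i\notin U\}\in\mathcal{I}$ as well by heredity. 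Thus $\{x_i\}$ is $\mathcal{I}$-eventually in $U$, and since $U$ was an arbitrary neighborhood, $x_i\xrightarrow{_{\mathcal{I}}}x$; each $x_i\in f^{-1}(y_i)$ by construction, completing the proof.

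I expect the main obstacle to be purely one of careful bookkeeping rather than of any deep idea: one must handle uniformly the indices $i$ that escape every $I_n$ (for which no single "best" $B_n$ exists, so a diagonal choice $x_i\in B_i$ is forced) and those trapped in some $I_n$, and then be sure that the finitely many "bad small indices" introduced by the cutoff $i\ge m$ do no harm — which is exactly where admissibility of $\mathcal{I}$ enters. A secondary point worth stating explicitly is that no countability or metrizability of $X$ is needed beyond the existence of the given decreasing network at the single point $x$.
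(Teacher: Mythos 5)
Your proof is correct, and it takes a genuinely different (and shorter) route than the paper. The paper never assigns a ``depth'' $n(i)$ to each index; instead it proves an intermediate Claim --- for each $k$ there is $n_k$ with $\{i: x_i\notin B_k\}\subseteq I_{n_k}$ --- by splitting into two cases according to whether the sets $F_n=\mathbb{N}\setminus I_n$ strictly decrease infinitely often or stabilize, and in each case it chooses $x_i$ blockwise (on $F_{n_k}\setminus F_{n_{k+1}}$, resp.\ along an enumeration of the eventually constant $F_{n_0}$, where admissibility is invoked to make $F_{n_0}$ infinite). Your construction replaces all of this with the single observation that the troublesome indices are exactly those with $y_i\in f(B_n)$ for every $n$ (your $n(i)=\infty$), for which no best $B_n$ exists since $\bigcap_n f(B_n)$ need not equal $f(\bigcap_n B_n)$; the diagonal choice $x_i\in B_i\cap f^{-1}(y_i)$ handles them uniformly, at the price of the finite exceptional set $\{1,\dots,m-1\}$ in the containment $\{i:x_i\notin B_m\}\subseteq I_m\cup\{1,\dots,m-1\}$, which admissibility absorbs. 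Since the paper's standing convention is that $\mathcal{I}$ is admissible (and its own case (b) already uses this), your argument is fully valid in the paper's setting; what the paper's more elaborate case analysis buys is essentially only the cleaner inclusion $\{i:x_i\notin B_k\}\subseteq I_{n_k}$ without a finite remainder. One small point you leave implicit and could state: since the $I_n$ increase, the set $\{n: i\notin I_n\}$ is an initial segment of $\mathbb{N}$, so when $1\le n(i)<\infty$ the supremum is attained, i.e.\ $i\notin I_{n(i)}$ and hence $B_{n(i)}\cap f^{-1}(y_i)\neq\emptyset$, which is what legitimizes your choice of $x_i$ in that case.
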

\begin{proof}
For each $n\in\mathbb{N}$, let $I_n=\{i\in\mathbb{N}: y_{i}\notin f(B_{n})\}$ and $F_n=\mathbb N\setminus I_n$. Then $I_{n}\in\mathcal{I}$, because the sequence $\{y_{i}\}_{i\in\mathbb{N}}$ is $\mathcal{I}$-eventually in $f(B_{n})$; and $i\in I_n$ (i.e., $i\not\in F_n$) if and only if $y_i\notin f(B_n)$. Since $\{B_{n}\}_{n\in\mathbb{N}}$ is decreasing in $X$, it follows that $I_{n}\subseteq I_{n+1}$ and $F_{n+1}\subseteq F_n$ for each $n\in\mathbb{N}$.

\medskip
\textbf{Claim.} For each $k\in\mathbb N$, there exists $n_k\in\mathbb N$ such that if $x_{i}\notin B_{k}$ then $y_{i}\notin f(B_{n_k})$.

(a)\, Suppose that there is some $n'>n$ such that $F_{n'}\subset F_n$ for each $n\in\mathbb N$. Take a sequence $\{n_k\}_{k\in\mathbb N}$ in $\mathbb N$ such that each $n_{k}<n_{k+1}$ and $F_{n_{k+1}}\subset F_{n_k}$. Then $F_{n_0}=\bigcup_{k\in\mathbb N}(F_{n_k}\setminus F_{n_{k+1}})$.
For each $i\in\mathbb{N}$, if $i\in F_{n_k}$, then $y_i\in f(B_{n_k})$, thus $f^{-1}(y_i)\cap B_{n_k}\neq\varnothing$. We can pick
$$x_i\in\left\{
\begin{array}{ll}
f^{-1}(y_{i}), & i\in I_{n_{0}},\\
f^{-1}(y_{i})\cap B_{n_k},& i\in F_{n_k}\setminus F_{n_{k+1}},  k\in\mathbb{N}.
\end{array}
\right.$$

Let $x_i\notin B_k$.  If $i\in I_{n_0}$, then $y_i\notin f(B_{n_0})\supseteq f(B_{n_k})$, and $y_i\notin f(B_{n_k})$. If $i\in F_{n_0}$, there is $k'\in\mathbb N$ such that $i\in F_{n_{k'}}\setminus F_{n_{k'+1}}$, thus $x_i\in B_{n_{k'}}\setminus B_k\subseteq B_{k'}\setminus B_k$, and $k'<k$, because $\{B_{n}\}_{n\in\mathbb{N}}$ is decreasing. It follows from $i\notin F_{n_{k'+1}}$ that $y_{i}\notin f(B_{n_{k'+1}})\supseteq f(B_{n_k})$, thus $y_{i}\notin f(B_{n_{k}})$.

\medskip
(b)\, Suppose that there is some $n_{0}\in\mathbb{N}$ such that $F_n=F_{n_{0}}$ for each $n>n_0$. Let $n_k=n_0+k$ for each $k\in\mathbb N$. Since $\mathcal{I}$ is admissible, the set $F_{n_0}$ is infinite. Put $F_{n_0}=\{m_k\in\mathbb N: k\in\mathbb N\}$. If $i=m_k$, then $i\in F_{n_0}=F_{n_k}$, thus $y_i\in f(B_{n_k})$, hence $f^{-1}(y_i)\cap B_{n_k}\neq\varnothing$. We can pick
$$x_i\in\left\{
\begin{array}{ll}
f^{-1}(y_{i}), & i\in I_{n_{0}},\\
f^{-1}(y_{i})\cap B_{n_k},& i=m_k,  k\in\mathbb{N}.
\end{array}
\right.$$

If $x_{i}\notin B_{k}$, then $x_i\notin B_{n_k}$, thus $i\notin F_{n_0}=F_{n_k}$, hence $y_i\notin f(B_{n_k})$.

\medskip
Next, we will show that $x_{i} \xrightarrow {_\mathcal{I}} x$ in $X$.
Let $U$ be a neighborhood of $x$ in $X$. There exists $k\in\mathbb{N}$ such that $x\in B_{k}\subseteq U$. Therefore
$\{i\in\mathbb{N}: x_{i}\notin U\}\subseteq \{i\in\mathbb{N}: x_{i}\notin B_{k}\}\subseteq\{i\in\mathbb{N}: y_{i}\notin f(B_{n_k})\}=I_{n_k}\in\mathcal{I}$, hence $\{i\in\mathbb{N}: x_{i}\notin U\}\in\mathcal{I}$, and further $x_{i} \xrightarrow {_\mathcal{I}} x$ in $X$.
\end{proof}

By Lemma \ref{decreasing network}, the following corollary is obvious.

\begin{corollary}
Every open and surjective mapping on a first-countable space is an $\mathcal{I}$-covering mapping for each ideal $\mathcal{I}$ on $\mathbb N$.
\end{corollary}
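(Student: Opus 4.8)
The plan is to deduce the corollary directly from Lemma \ref{decreasing network} by constructing, for a given open surjection $f:X\to Y$ on a first-countable space $X$, an appropriate decreasing network whose images are $\mathcal{I}$-eventual neighborhoods of the relevant limit point in $Y$. First I would let $\{y_i\}_{i\in\mathbb N}$ be a sequence $\mathcal{I}$-converging to a point $y\in Y$; since $f$ is surjective, choose $x\in f^{-1}(y)$. Because $X$ is first-countable, fix a countable neighborhood base $\{U_n\}_{n\in\mathbb N}$ at $x$ and replace it by the decreasing sequence $B_n=\bigcap_{k\le n}U_k$, which is still a (decreasing) network at $x$ in $X$.

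The key step is to verify the hypothesis of Lemma \ref{decreasing network}, namely that $\{y_i\}_{i\in\mathbb N}$ is $\mathcal{I}$-eventually in $f(B_n)$ for every $n$. Here I would use openness of $f$: each $B_n$ contains an open set $V_n$ with $x\in V_n\subseteq B_n$ (one may take $V_n$ to be the interior of $B_n$, which is nonempty as it contains $x$ — or simply work with $U_n$ itself and intersect interiors), so $f(V_n)$ is open in $Y$ and $y=f(x)\in f(V_n)\subseteq f(B_n)$. Thus $f(B_n)$ is a neighborhood of $y$ in $Y$, and since $y_i\xrightarrow{_{\mathcal I}} y$, the set $\{i\in\mathbb N: y_i\notin f(B_n)\}\subseteq\{i\in\mathbb N: y_i\notin f(V_n)\}$ lies in $\mathcal{I}$; that is, $\{y_i\}$ is $\mathcal{I}$-eventually in $f(B_n)$.

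With both hypotheses of Lemma \ref{decreasing network} in place — $\{B_n\}$ a decreasing network at $x$, and $\{y_i\}$ $\mathcal{I}$-eventually in each $f(B_n)$ — the lemma yields a sequence $\{x_i\}_{i\in\mathbb N}$ with $x_i\in f^{-1}(y_i)$ and $x_i\xrightarrow{_{\mathcal I}} x$ in $X$, where $x\in f^{-1}(y)$. This is precisely the defining condition for $f$ to be an $\mathcal{I}$-covering mapping (Definition 2.1), completing the argument. I should be mildly careful about one routine point: I want a decreasing base consisting of sets with nonempty interior containing $x$, which is automatic from intersecting finitely many open neighborhoods, so no first-countability subtlety is lost. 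The only genuine content is the openness-to-neighborhood passage, and that is immediate; so I expect no real obstacle here — the corollary is, as the paper says, obvious once Lemma \ref{decreasing network} is available, the entire difficulty having been absorbed into that lemma's proof.
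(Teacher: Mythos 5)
Your proposal is correct and follows exactly the route the paper intends: the paper states the corollary as an immediate consequence of Lemma \ref{decreasing network}, and your verification (pick $x\in f^{-1}(y)$, take a decreasing open base $\{B_n\}$ at $x$, use openness of $f$ so that each $f(B_n)$ is a neighborhood of $y$ and hence the sequence is $\mathcal{I}$-eventually in it, then apply the lemma) is precisely the omitted argument. Nothing is missing; the handling of interiors versus open base members is a harmless detail, since one may take the $B_n$ open from the start.
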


The following is the main result in this paper, in which the space $X$ is not assumed to satisfy any separation axiom.

\begin{theorem}\label{main}
Let $\mathcal{I}$ be an ideal on $\mathbb{N}$. A space $X$ has an $\mathcal{I}$-$csf$-network if and only if $X$ is a continuous and $\mathcal{I}$-covering image of a metric space.
\end{theorem}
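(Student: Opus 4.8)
The plan is to prove the two directions separately, using Ponomarev's system (Definition \ref{d25}) as the main device.

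For the easier ``if'' direction, suppose $f\colon M\to X$ is a continuous $\mathcal{I}$-covering surjection from a metric space $M$. A metric space has a $\sigma$-locally-finite base $\mathcal{B}$, and in particular $\mathcal{B}$ is a $cs$-network, hence an $\mathcal{I}_{fin}$-$cs$-network, hence trivially an $\mathcal{I}$-$cs$-network for each admissible $\mathcal{I}$ (every $\mathcal{I}_{fin}$-$cs$-network is an $\mathcal{I}$-$cs$-network since $\mathcal{I}_{fin}\subseteq\mathcal{I}$); and an $\mathcal{I}$-$cs$-network is in particular an $\mathcal{I}$-$csf$-network (for each point $x$, take any countable $\mathcal{I}$-$cs$-network at $x$, e.g. a countable local base, as the witnessing subfamily $\mathcal{P}'$). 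So $M$ has an $\mathcal{I}$-$csf$-network, and by Lemma \ref{I-covering-pre} its image $X$ has an $\mathcal{I}$-$csf$-network. One should be slightly careful that the definition of $\mathcal{I}$-$csf$-network requires the witnessing subfamily $\mathcal{P}'$ to work for a fixed $\mathcal{I}$-convergent sequence; since in a metric (first-countable) space a countable local base at $x$ is already an $\mathcal{I}$-$cs$-network at $x$, this is immediate, so Lemma \ref{I-covering-pre} applies directly.

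For the ``only if'' direction, suppose $\mathcal{P}=\bigcup_{x\in X}\mathcal{P}_x$ is an $\mathcal{I}$-$csf$-network for $X$, where each $\mathcal{P}_x$ is a countable $\mathcal{I}$-$csf$-network at $x$. The construction needs, for each $x$, a countable decreasing network at $x$: first add to $\mathcal{P}_x$ the singleton $\{x\}$ and then close each $\mathcal{P}_x$ under finite intersections (still countable), so we may assume $\mathcal{P}_x$ is a countable network at $x$ (taking finite intersections of members containing $x$) and that any countable subfamily of $\mathcal{P}_x$ that is required by Definition \ref{d2} still sits inside. Now form Ponomarev's system $(f,M,X,\mathscr{P})$ with $\mathscr{P}=\mathcal{P}\cup\{\{x\}:x\in X\}$ (note $\mathscr{P}$ has the property required in Definition \ref{d25}: for each $x$, $\mathcal{P}_x$ together with $\{x\}$ contains a countable network at $x$). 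Then $M$ is a metric space and $f\colon M\to X$ is continuous and surjective by the cited facts. It remains to show $f$ is $\mathcal{I}$-covering: given $y_n\xrightarrow{_\mathcal{I}}y$ in $X$, we must lift this. Fix $\alpha=(\alpha_i)\in M$ with $f(\alpha)=x_\alpha=y$, chosen so that $\{P_{\alpha_i}\}_{i\in\mathbb{N}}$ is a \emph{decreasing} network at $y$ consisting of members of the countable $\mathcal{I}$-$csf$-subfamily $\mathcal{P}'\subseteq\mathcal{P}_y$ associated to the given sequence $\{y_n\}$ — this is where we use that $\mathcal{P}_y$ was closed under finite intersections, so such a decreasing network exists; and by the defining property of $\mathcal{I}$-$csf$-networks, for each basic open $P_{\alpha_i}$ containing $y$ the sequence $\{y_n\}$ is $\mathcal{I}$-eventually in some member of $\mathcal{P}'$ contained in $P_{\alpha_i}$, whence (using decreasingness) $\{y_n\}$ is $\mathcal{I}$-eventually in $f(B_i)$ where $B_i$ is the appropriate basic open set of $M$ corresponding to $P_{\alpha_i}$. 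Then Lemma \ref{decreasing network}, applied with $X$ replaced by $M$ and $x$ replaced by $\alpha$ and $B_n$ the standard decreasing basic neighborhoods of $\alpha$ in $M$ (whose $f$-images are exactly $P_{\alpha_1}\cap\cdots\cap P_{\alpha_n}$), produces a sequence $\{x_i\}$ in $M$ with $x_i\in f^{-1}(y_i)$ and $x_i\xrightarrow{_\mathcal{I}}\alpha$. This exhibits the required lift, so $f$ is $\mathcal{I}$-covering.

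The main obstacle is the bookkeeping in the ``only if'' direction: one must arrange a single point $\alpha\in M$ over $y$ whose associated decreasing network in $M$ has $f$-images in which the \emph{one fixed} sequence $\{y_n\}$ is $\mathcal{I}$-eventually at every level — which is exactly what the ``$csf$'' (rather than ``$cs$'') strengthening buys us, since the witnessing countable subfamily $\mathcal{P}'$ is allowed to depend on the sequence. Closing $\mathcal{P}_y$ under finite intersections to get a decreasing network, and checking that the hypotheses of Lemma \ref{decreasing network} are genuinely met (in particular that $\{y_n\}$ is $\mathcal{I}$-eventually in $f(B_n)$ for \emph{every} $n$, not just cofinally many), is the delicate point; everything else is a routine application of the cited properties of Ponomarev's system together with Lemmas \ref{decreasing network} and \ref{I-covering-pre}.
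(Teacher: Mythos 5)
Your sufficiency argument is fine and matches the paper's. The necessity direction, however, has a genuine gap: you apply Ponomarev's system directly to $X$ with your enlarged family $\mathscr{P}$, but Definition \ref{d25} is formulated only for $T_1$-spaces, while Theorem \ref{main} is stated with no separation axiom on $X$ (the paper stresses this explicitly). Without separation the Ponomarev assignment $\alpha\mapsto x_\alpha$ need not be a function: if two points are topologically indistinguishable (e.g.\ an indiscrete doubleton, where $\{X\}$ is already an $\mathcal{I}$-$csf$-network, so your $\mathscr{P}$ is $\{X,\{x\},\{y\}\}$ and the constant index sequence gives a family that is a network at both points), then ``the point $x_\alpha$'' is undefined, and the facts you cite (``$M$ is metric, $f$ is continuous and surjective'') are not available. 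This is precisely the obstacle the paper's proof is built to avoid: it first forms the metric space $Y=\prod_{n\in\mathbb N}Y_n$ (copies of $X$ with the discrete topology), the $T_2$-subspace $Z=\bigcup_{x\in X}\{x\}\times Y_x\subseteq X\times Y$, applies Ponomarev's system to $Z$ with the family $(\mathcal{P}\times\mathcal{Q})_{\mid Z}$ ($\mathcal{Q}$ a point-countable base of $Y$), proves that the resulting $g\colon M\to Z$ is $\mathcal{I}$-covering, and finally shows that $f=\pi_{1\mid Z}\circ g$ is still $\mathcal{I}$-covering by lifting a given $\mathcal{I}$-convergent sequence of $X$ to one in $Z$ (pairing each $x_m$ with a suitable $v_m\in Y_{x_m}$ converging to the constant point over the limit). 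Your route, suitably cleaned up, does prove the theorem for $T_1$-spaces --- it is essentially how the paper proves Corollary \ref{c31} --- but it does not prove the stated theorem.

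Two smaller points. First, you assume each $\mathcal{P}_x$ is a \emph{countable} $\mathcal{I}$-$csf$-network at $x$; Definition \ref{d2} does not make $\mathcal{P}_x$ countable (only the sequence-dependent witness $\mathcal{P}'$ is), and assuming it would reduce the hypothesis to $\mathcal{I}$-$csf$-countability, the strictly stronger situation of Theorem \ref{csf}. Your construction does not actually need this, but the parenthetical ``still countable'' should go. Second, in the lifting step you invoke the $\mathcal{I}$-$csf$-property ``for each basic open $P_{\alpha_i}$'': the $P_{\alpha_i}$ are network members, not open sets, so the definition does not apply to them; the correct move is to pass to the countable subfamily of $\mathcal{P}'$ consisting of those members containing $y$ in which $\{y_n\}$ is $\mathcal{I}$-eventually (one such member lies inside every open $U\ni y$, so they form a network at $y$). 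Note also that closing under finite intersections is unnecessary: since $f(B_k)=\bigcap_{i\leq k}P_{\alpha_i}$ (the paper's Claim~2, which you assert but should prove or cite), the sequence is automatically $\mathcal{I}$-eventually in each $f(B_k)$ because ideals are closed under finite unions --- exactly the computation in the paper's Claim~3 --- after which Lemma \ref{decreasing network} applies as you say.
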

\begin{proof}
Sufficiency. Suppose that there exist a metric space $M$ and a continuous and $\mathcal{I}$-covering mapping $f:M\to X$. Since each point of $M$ has a countable local base, by Lemma \ref{I-covering-pre}, the space $X$ has an $\mathcal{I}$-$csf$-network.

\medskip
Necessity. Suppose that a space $X$ has an $\mathcal{I}$-$csf$-network. We will construct a metric space $M$ and the required mapping $f$ on $M$ in the following steps.

\medskip
\textbf{Claim~1.} There are a metric space $Y$ and a $T_2$-subspace $Z\subseteq X\times Y$ such that the restriction mapping $\pi_{1\mid Z}: Z\to X$ is continuous and surjective, where $\pi_1: X\times Y\to X$ is the projective mapping.

Let $Y$ be the product space $\prod_{n\in\mathbb N}Y_n$, where each space $Y_n$ is the set $X$ endowed with the discrete topology. Then $Y$ is a metric space. For each $x\in X$, put
$$Y_x=\{(y_n)\in Y: y_n=x\mbox{~except for finite~}n\in\mathbb N\}.$$
Then the family $\{Y_x: x\in X\}$ is disjoint. Put
$$Z=\bigcup\{\{x\}\times Y_x: x\in X\}.$$
It is obvious that the restriction mapping $\pi_{1\mid Z}: Z\to X$ is continuous and surjective. Since the family $\{Y_x: x\in X\}$ is disjoint, the restriction mapping $\pi_{2\mid Z}: Z\to Y$ is continuous and injective, and $Z$ is a $T_2$-space.

Let $\mathcal{P}$ be an $\mathcal{I}$-$csf$-network for the space $X$, and $\mathcal{Q}$ be a point-countable base for the metric space $Y$. Put $\mathcal{R}=(\mathcal{P}\times\mathcal{Q})_{\mid Z}=\{(P\times Q)\cap Z: P\in\mathcal{P}, Q\in\mathcal{Q}\}$, and denote $\mathcal{R}=\{R_\alpha: \alpha\in\mathit{\Lambda}\}$.
Let $(g, M, Z, \mathcal{R})$ be Ponomarev's system in Definition \ref{d25}. Then the mapping $g: M\to Z$ is continuous and surjective.

\medskip
For each $\alpha=(\alpha_{n})\in M$ and $k\in\mathbb{N}$, put
$$B_{k}=\{(\beta_{n})\in M: \beta_{n}=\alpha_{n}\mbox{~for each~}n\leq k\}.$$

\medskip
\textbf{Claim~2.} $g(B_{k})=\bigcap_{n\leq k}R_{\alpha_{n}}$.

Suppose that a point $\beta=(\beta_{n})\in B_{k}$. Then $$g(\beta)\in\bigcap_{n\in \mathbb{N}} R_{\beta_{n}}\subseteq \bigcap_{n\leq k}R_{\beta_{n}}=\bigcap_{n\leq k}R_{\alpha_n},$$
hence $g(B_{k})\subseteq\bigcap_{n\leq k}R_{\alpha_{n}}$. On the other hand, assume that a point $z\in\bigcap_{n\leq k}R_{\alpha_{n}}$. Since $g$ is surjective, there exists $\gamma=(\gamma_n)\in M$ with $g(\gamma)=z$, i.e., the subfamily $\{R_{\gamma_{n}}\}_{n\in\mathbb N}$ of $\mathcal{R}$ is a network at $z$ in $Z$. For each $n\in\mathbb N$, define $\beta_n\in\mathit{\Lambda}$ such that $\beta_n=\alpha_n$ if $n\leq k$ and $\beta_{n}=\gamma_{n-(k+1)}$ if $n>k$. Then the family $\{R_{\beta_n}\}_{n\in\mathbb N}$ is also a network at $z$ in $Z$.
Put $\beta=(\beta_{n})\in \mathit{\Lambda}^{\omega}$. Then $\beta\in B_k$ and $z=g(\beta)\in g(B_{k})$, and further $\bigcap_{n\leq k}R_{\alpha_{n}}\subseteq g(B_{k})$. Therefore, $g(B_{k})=\bigcap_{n\leq k}R_{\alpha_{n}}$.

\medskip
\textbf{Claim~3.} $g$ is an $\mathcal{I}$-covering mapping.

Let $\{z_{m}\}_{m\in\mathbb{N}}$ be a sequence $\mathcal{I}$-converging to $z$ in $Z$. Put $z=(x, y)$ and each $z_m=(x_m, y_m)$. Since the projective mappings $\pi_1$ and $\pi_2$ are continuous, they preserve $\mathcal{I}$-convergence. Then $x_m \xrightarrow {_{\mathcal{I}}}x$ in $X$ and $y_m\xrightarrow{_{\mathcal{I}}}y$ in $Y$. Let $\mathcal{P}_x=\{P_{x, i}\}_{i\in\mathbb N}\subseteq\mathcal{P}$ be a countable network at $x$ in $X$ such that the sequence $\{x_m\}_{m\in\mathbb N}$ is $\mathcal{I}$-eventually in each $P_{x, i}$, and let $\mathcal{Q}_y=\{Q_{y, j}\}_{j\in\mathbb N}\subseteq\mathcal{Q}$ be a countable local base at $y$ in $Y$, in which the sequence $\{y_m\}_{m\in\mathbb N}$ is $\mathcal{I}$-eventually in each $Q_{y, j}$, because the set $Q_{y, j}$ is open in $Y$. Since $\{(P_{x, i}\times Q_{y, j})\cap Z: i, j\in\mathbb N\}$ is a network at $z$ in $Z$, there is $\alpha=(\alpha_n)\in M$ such that $g(\alpha)=z$ and $\{R_{\alpha_n}: n\in\mathbb N\}=\{(P_{x, i}\times Q_{y, j})\cap Z: i, j\in\mathbb N\}$. Denote $R_{\alpha_n}=(P_{x, i_{\alpha_n}}\times Q_{y, j_{\alpha_n}})\cap Z$ for each $n\in\mathbb N$.

For the above $\alpha=(\alpha_n)\in M$ and each $k\in\mathbb N$, by Claim 2,
\begin{align*}
\{m\in\mathbb N: z_m\not\in g(B_k)\}&=\bigcup_{n\leq k}\{m\in\mathbb N: z_m\not\in R_{\alpha_n}\}\\
&=\bigcup_{n\leq k}(\{m\in\mathbb N: x_m\not\in P_{x, i_{\alpha_n}}\}\cup\{m\in\mathbb N: y_m\not\in Q_{y, j_{\alpha_n}}\})\in\mathcal{I}
\end{align*}
This implies that the sequence $\{z_{m}\}_{m\in\mathbb{N}}$ is $\mathcal{I}$-eventually in $g(B_k)$. It is obvious that the family $\{B_{k}\}_{k\in\mathbb{N}}$ is a decreasing local base at $\alpha$ in $M$. In the view of  Lemma \ref {decreasing network}, $g$ is an $\mathcal{I}$-covering mapping.

\medskip
Finally, put $f=\pi_{1\mid Z}\circ g:M\to X$. Then $f$ is continuous and surjective. Let $\{x_{m}\}_{m\in\mathbb{N}}$ be a sequence in $X$ with $x_{m} \xrightarrow {_{\mathcal{I}}} x$. Put $y=(y_n)\in Y$ with $y_n=x$ for each $n\in\mathbb N$. Then $y\in Y_x$, thus $(x, y)\in Z$. For each $m\in\mathbb N$, define $v_m=(v_{m,n})\in Y$ as $v_{m,n}=x$ if $n\leq m$ and $v_{m,n}=x_m$ if $n>m$; then $v_m\in Y_{x_m}$, thus $(x_m, v_m)\in Z$. It is easy to see that the sequence $\{v_{m}\}_{m\in\mathbb N}$ converges to the point $y$ in $Y$. Let $O$ be a neighborhood of the point $(x, y)$ in $Z$. Take an open subset $U$ in $X$ and an open subset $V$ in $Y$ such that $(x, y)\in (U\times V)\cap Z\subseteq O$. Then $\{m\in\mathbb N: (x_m, v_m)\not\in O\}\subseteq\{m\in\mathbb N: (x_m, v_m)\not\in (U\times V)\cap Z\}=\{m\in\mathbb N: x_m\not\in U\}\cup\{m\in\mathbb N: v_m\not\in V\}\in\mathcal{I}$. Thus the sequence $\{(x_{m}, v_m)\}_{m\in\mathbb N}$ is $\mathcal{I}$-convergent to $(x, y)$ in $Z$. By Claim 3, there exists a sequence $\{z_{m}\}_{m\in\mathbb{N}}$ in $M$, which satisfies $z_{m} \xrightarrow {_{\mathcal{I}}} z\in g^{-1}((x, y))\subseteq f^{-1}(x)$ and each $z_{m}\in g^{-1}((x_{m}, v_m))\subseteq f^{-1}(x_m)$. Thus, $f$ is an $\mathcal{I}$-covering mapping.
\end{proof}

At the end of this section, we give an application of the proving method in Theorem \ref{main}. A family $\mathcal{P}$ of subsets of a set $X$ is called {\it point-countable} if each point of $X$ belongs to at least countable elements of the family $\mathcal{P}$. A mapping $f:X\to Y$ is an {\it $s$-mapping} if $f^{-1}(y)$ is a separable subset of $X$ for each $y\in Y$.

\begin{corollary}\label{c31}
Let $\mathcal{I}$ be an ideal on the set $\mathbb N$. Then a $T_1$-space $X$ has a point-countable $\mathcal{I}$-$cs$-network if and only if $X$ is the image of a metric space under a continuous $\mathcal{I}$-covering and $s$-mapping.
\end{corollary}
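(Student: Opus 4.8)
The plan is to handle the two implications separately, with the necessity direction modeled on the proof of Theorem \ref{main} but built on a leaner Ponomarev system, so that the resulting mapping is automatically an $s$-mapping.

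For sufficiency I would start from a continuous $\mathcal I$-covering $s$-mapping $f\colon M\to X$ with $M$ metrizable, fix a $\sigma$-locally finite (hence point-countable) base $\mathcal B$ of $M$, and observe that any base of a space is an $\mathcal I$-$cs$-network: if $x_n\xrightarrow{_{\mathcal I}}x\in U$ with $U$ open, choose $B\in\mathcal B$ with $x\in B\subseteq U$, and then $\{n\in\mathbb N:x_n\notin B\}\in\mathcal I$. By the preservation theorem \cite[Theorem 4.3]{zl202011}, $\{f(B):B\in\mathcal B\}$ is then an $\mathcal I$-$cs$-network for $X$, and it remains to see it is point-countable. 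I would fix $y\in X$ and a countable dense subset $D$ of the separable fibre $f^{-1}(y)$; since each $B\in\mathcal B$ is open, $f^{-1}(y)\cap B\neq\varnothing$ forces $D\cap B\neq\varnothing$, so $\{B\in\mathcal B:y\in f(B)\}\subseteq\bigcup_{d\in D}\{B\in\mathcal B:d\in B\}$ is countable. Hence $X$ has a point-countable $\mathcal I$-$cs$-network.

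For necessity, given a $T_1$-space $X$ with a point-countable $\mathcal I$-$cs$-network $\mathcal P$, applying the $\mathcal I$-$cs$-network condition to constant sequences shows that $\mathcal P_x:=\{P\in\mathcal P:x\in P\}$ is a network at each $x$, and it is countable by point-countability; hence the Ponomarev system $(f,M,X,\mathcal P)$ of Definition \ref{d25} exists, with $M$ metrizable and $f\colon M\to X$ continuous and surjective. Writing $\mathcal P=\{P_\alpha:\alpha\in\mathit{\Lambda}\}$ and $\mathit{\Lambda}_x:=\{\alpha\in\mathit{\Lambda}:x\in P_\alpha\}$, which is countable, one has $f^{-1}(x)\subseteq\mathit{\Lambda}_x^{\omega}$, because $f(\alpha)=x$ means $\{P_{\alpha_i}\}_i$ is a network at $x$ and so every $\alpha_i\in\mathit{\Lambda}_x$; since $\mathit{\Lambda}_x^{\omega}$ is second countable, hence separable, $f$ is an $s$-mapping. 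To prove $f$ is $\mathcal I$-covering I would reuse, verbatim with $Z,\mathcal R$ replaced by $X,\mathcal P$, the argument of Claims $2$ and $3$ of Theorem \ref{main}: for $\alpha=(\alpha_n)\in M$ the sets $B_k=\{(\beta_n)\in M:\beta_n=\alpha_n$ for $n\le k\}$ form a decreasing local base at $\alpha$ in $M$ with $f(B_k)=\bigcap_{n\le k}P_{\alpha_n}$. Given $x_m\xrightarrow{_{\mathcal I}}x$ in $X$, I would put $\mathcal P_x^{\,*}:=\{P\in\mathcal P_x:\{x_m\}_{m\in\mathbb N}$ is $\mathcal I$-eventually in $P\}$; this is a countable network at $x$, since for open $U\ni x$ the $\mathcal I$-$cs$-network property at $x$ applied to $\{x_m\}$ yields $P\in\mathcal P$ and $I\in\mathcal I$ with $\{x\}\cup\{x_m:m\in\mathbb N\setminus I\}\subseteq P\subseteq U$, whence $P\in\mathcal P_x^{\,*}$ and $P\subseteq U$. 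Choosing $\alpha\in M$ with $f(\alpha)=x$ and $\{P_{\alpha_n}:n\in\mathbb N\}=\mathcal P_x^{\,*}$ (repeating indices if $\mathcal P_x^{\,*}$ is finite), one gets $\{m\in\mathbb N:x_m\notin f(B_k)\}=\bigcup_{n\le k}\{m\in\mathbb N:x_m\notin P_{\alpha_n}\}\in\mathcal I$ for each $k$, so $\{x_m\}$ is $\mathcal I$-eventually in each $f(B_k)$, and Lemma \ref{decreasing network} then supplies the required lifting $\alpha^{(m)}\xrightarrow{_{\mathcal I}}\alpha\in f^{-1}(x)$ with each $\alpha^{(m)}\in f^{-1}(x_m)$.

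I expect the only genuine difficulty to be this last step — extracting, for the given $\mathcal I$-convergent sequence, a countable subfamily of $\mathcal P$ that is still a network at the limit point; this is exactly where point-countability of $\mathcal P$ is indispensable, and it plays the role that the auxiliary product-space device plays in the proof of Theorem \ref{main} (that device would destroy the $s$-mapping property, so it must be avoided here). The remaining verifications — that a base is an $\mathcal I$-$cs$-network, that $f$ is an $s$-mapping, and the identity $f(B_k)=\bigcap_{n\le k}P_{\alpha_n}$ — are routine and parallel material already present in the paper.
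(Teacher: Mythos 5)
Your proposal is correct and follows essentially the same route as the paper: necessity via the Ponomarev system $(f,M,X,\mathcal P)$ built directly on the point-countable $\mathcal I$-$cs$-network (with $f^{-1}(x)\subseteq\mathit{\Lambda}_x^{\omega}$ giving the $s$-mapping property, and Claim 2 plus Lemma \ref{decreasing network} giving $\mathcal I$-covering), and sufficiency via the image of a point-countable base under the continuous $\mathcal I$-covering $s$-mapping. Your construction of $\mathcal P_x^{\,*}$ is exactly the detail the paper leaves implicit when it says the $\mathcal I$-covering property ``follows from Claims 2 and 3,'' so it is a faithful (and slightly more explicit) rendering of the paper's argument.
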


\begin{proof}
Let $X$ be a $T_1$-space with a point-countable $\mathcal{I}$-$cs$-network $\mathcal{R}$. Let $(g, M, X, \mathcal{R})$ be Ponomarev's system. It follows from Claims 2 and 3 in the proof of Theorem \ref{main} that the mapping $g: M\to X$ is a continuous and $\mathcal{I}$-covering mapping. Put $\mathcal{R}=\{R_\alpha: \alpha\in\mathit{\Lambda}\}$. If $x\in X$, then
\begin{align*}
g^{-1}(x)&=\{(\alpha_n)\in M: \{R_{\alpha_n}\}_{n\in\mathbb N}\mbox{~forms a network at the point~}x\in X\}\\
&\subseteq\{\alpha\in\mathit{\Lambda}: x\in R_\alpha\}^\omega
\end{align*}
thus $g^{-1}(x)$ is a separable subset of $M$. Hence, $g$ is an $s$-mapping.

On the other hand, let $f:M\to X$ be a continuous $\mathcal{I}$-covering and $s$-mapping, where $M$ is a metric space. Let $\mathcal{B}$ be a point-countable base for $M$. By Lemma \ref{I-covering-pre}, it is easy to check that the family $\{f(B): B\in\mathcal{B}\}$ is an $\mathcal{I}$-$cs$-network for $X$. Since every point-countable family of open subsets of a separable space is countable, the family $\{f(B): B\in\mathcal{B}\}$ is point-countable. Thus, $X$ has a point-countable $\mathcal{I}$-$cs$-network.
\end{proof}

\section{$\mathcal{I}$-$csf$-countable spaces}

A general space than a space with a point-countable $\mathcal{I}$-$cs$-network is the following $\mathcal{I}$-$csf$-countable space. A space $X$ is called an {\it $\mathcal{I}$-$csf$-countable space} if, $X$ has a countable $\mathcal{I}$-$cs$-network at each point in $X$. Each $\mathcal{I}_{fin}$-\emph{csf}-countable space is called a {\it $csf$-countable space} \cite[p. 181]{LY01}. It is obvious that every first-countable space is $\mathcal{I}$-$csf$-countable, and every $\mathcal{I}$-$csf$-countable space has an $\mathcal{I}$-$csf$-network.

A mapping $f:X\to Y$ is a {\it boundary $s$-mapping} if $\partial f^{-1}(y)$ is a separable subset of $X$ for each $y\in Y$.

\begin{theorem}\label{csf}
Let $\mathcal{I}$ be an ideal on the set $\mathbb N$.  Then a $T_1$-space $X$ is an $\mathcal{I}$-$csf$-countable space if and only if $X$ is the image of a metric space under a continuous $\mathcal{I}$-covering and boundary $s$-mapping.
\end{theorem}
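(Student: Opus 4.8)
The plan is to prove both implications, the sufficiency by a direct ``preimages of small balls'' argument and the necessity by a Ponomarev construction that is then surgically cut down.

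\emph{Sufficiency.} Suppose $f\colon M\to X$ is continuous, $\mathcal{I}$-covering and a boundary $s$-mapping, with $(M,\varrho)$ metric. For each $x\in X$ choose a countable dense set $D_x$ of the separable space $\partial f^{-1}(x)$ and set $\mathcal{P}_x=\{\{x\}\}\cup\{f(O(d,q)):d\in D_x,\ q\in\mathbb{Q}^{+}\}$, where $O(d,q)$ is the $\varrho$-ball. I would show $\mathcal{P}_x$ is a countable $\mathcal{I}$-$cs$-network at $x$. It is a network at $x$ since $\{x\}\in\mathcal{P}_x$. If $\{x_n\}$ is $\mathcal{I}$-convergent to $x\in U$ with $U$ open, then either $\{n:x_n\neq x\}\in\mathcal{I}$, in which case $P=\{x\}$ works; or, using that $f$ is $\mathcal{I}$-covering, there is $\{z_n\}$ $\mathcal{I}$-convergent to some $z\in f^{-1}(x)$ with $z_n\in f^{-1}(x_n)$, and the ``escaping terms'' argument (if $z\in\mathrm{int}\,f^{-1}(x)$ then $\{n:x_n\neq x\}\subseteq\{n:z_n\notin \mathrm{int}\,f^{-1}(x)\}\in\mathcal{I}$) forces $z\in\partial f^{-1}(x)$; then pick $d\in D_x$ with $\varrho(d,z)<q$ for a rational $q$ with $O(z,3q)\subseteq f^{-1}(U)$, and $P=f(O(d,2q))\in\mathcal{P}_x$ satisfies $\{x\}\cup\{x_n:n\notin I\}\subseteq P\subseteq U$ for the set $I\in\mathcal{I}$ on which $\{z_n\}$ escapes $O(z,q)$. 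Hence $X$ is $\mathcal{I}$-$csf$-countable; $T_1$-ness is not needed here.

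\emph{Necessity.} Let $X$ be a $T_1$-space with a countable $\mathcal{I}$-$cs$-network $\mathcal{P}_x$ at each $x$; after adding $\{x\}$ and deleting the members not containing $x$ we may assume $\{x\}\in\mathcal{P}_x$ and $x\in\bigcap\mathcal{P}_x$. Put $\mathcal{R}=\bigcup_{x\in X}\mathcal{P}_x=\{R_\alpha:\alpha\in\mathit\Lambda\}$ and let $(g,M_0,X,\mathcal{R})$ be the Ponomarev system, with the standard decreasing local bases $\{B_k\}$ in $M_0$ and $g(B_k)=\bigcap_{i\le k}R_{\alpha_i}$ as in Claim~2 of Theorem \ref{main}. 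Define
\[
M=\Big\{\alpha=(\alpha_i)\in M_0:\ \big(\exists k\ \textstyle\bigcap_{i\le k}R_{\alpha_i}=\{g(\alpha)\}\big)\ \text{or}\ \big(\forall i\ R_{\alpha_i}\in\mathcal{P}_{g(\alpha)}\big)\Big\},\qquad f=g|_M .
\]
The first alternative isolates the points interior to their fibres, the second keeps the relevant part of a fibre inside the block $L_x=\{\alpha\in M_0:g(\alpha)=x,\ \forall i\,R_{\alpha_i}\in\mathcal{P}_x\}$. I would then check, in order: (a) $M$ is metrizable and $f$ is continuous and surjective (for each $x$, any enumeration of $\mathcal{P}_x$ gives a point of $L_x\subseteq M$ over $x$); (b) $f$ is $\mathcal{I}$-covering: given $\{x_i\}$ $\mathcal{I}$-convergent to $x$, thin $\mathcal{P}_x$ to a countable network $\mathcal{N}$ at $x$ with $\{x_i\}$ $\mathcal{I}$-eventually in each member of $\mathcal{N}$, let $z\in L_x\subseteq M$ enumerate $\mathcal{N}$, verify $g(B_k\cap M)=\bigcap_{i\le k}R_{z_i}$ (for ``$\supseteq$'', any $y$ in that intersection equals $g(\mu)$ where $\mu$ agrees with $z$ on the first $k+1$ coordinates and then repeats the index of $\{y\}$, and such $\mu$ lies in $M$ by the first alternative), and apply Lemma \ref{decreasing network} to $f$ and $z$; (c) $f$ is a boundary $s$-mapping: writing $f^{-1}(x)=M_x=A_x\cup L_x'$, where $A_x$ collects the points realising the first alternative and $L_x'=M_x\cap L_x$, every point of $A_x$ is interior to $M_x$ in $M$, and $M_x$ is closed in $M$ (a point of $\overline{M_x}$ has each coordinate set containing $x$, so by $T_1$ it maps to $x$); hence $\partial M_x\subseteq M_x\setminus A_x\subseteq L_x$, and $L_x$, being a subspace of the second countable space $\{\alpha\in M_0:\forall i\,R_{\alpha_i}\in\mathcal{P}_x\}$, is separable.

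\emph{Main obstacle.} The delicate point is (c), tied to (b). A bare Ponomarev system over $\mathcal{R}$ need not be a boundary $s$-mapping: over $X=\omega_1+1$, say, a network at a limit ordinal $\mu$ may begin with a member of some $\mathcal{P}_\nu$ with $\nu>\mu$ merely because it contains $\mu$, and as $\nu$ varies one gets uncountably many boundary points at distance bounded below. The subspace $M$ must simultaneously (i) retain enough points to keep $f$ an $\mathcal{I}$-covering map — which is exactly what the ``interior'' alternative provides, through the short preimages $\{y\}$ used to compute $g(B_k\cap M)$ — and (ii) discard the ``inessential'' coordinates so that each $\partial M_x$ falls inside the countably indexed block $L_x$. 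Getting this balance right, i.e.\ the equality $g(B_k\cap M)=\bigcap_{i\le k}R_{z_i}$ in (b) and the inclusion $\partial M_x\subseteq L_x$ in (c), is where the work lies; metrizability, continuity, surjectivity and the ideal-theoretic bookkeeping are routine.
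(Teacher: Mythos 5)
Your proposal is correct, but the necessity goes by a genuinely different route than the paper's. For sufficiency you do essentially what the paper does (the paper takes images $f(B)$ of members of a point-countable base of $M$ meeting $\partial f^{-1}(x)$, you take images of rational balls centred at a countable dense subset of $\partial f^{-1}(x)$; the dichotomy ``the sequence is $\mathcal{I}$-eventually equal to $x$'' versus ``the lifted $\mathcal{I}$-limit lies on $\partial f^{-1}(x)$'' is identical), with the small twist that adjoining $\{x\}$ to $\mathcal{P}_x$ lets you dispense with the $T_1$ hypothesis and with the paper's choice of a point $m_x\in\partial f^{-1}(x)$. For necessity, however, the paper does not touch the Ponomarev system directly: it retopologizes $X$ at each point to get $X_x$ (only $x$ keeps its neighbourhoods, all other points isolated), forms $Y=\bigoplus_{x\in X}X_x$ with the natural map $h:Y\to X$, checks that $Y$ has a point-countable $\mathcal{I}$-$cs$-network and that $h$ is continuous, $\mathcal{I}$-covering with $\partial h^{-1}(x)\subseteq\{x\}$, and then invokes Corollary \ref{c31} to get a continuous $\mathcal{I}$-covering $s$-mapping $g:M\to Y$; the composite is a boundary $s$-mapping because $\partial f^{-1}(x)\subseteq g^{-1}(\partial h^{-1}(x))$ and $g$ is an $s$-mapping. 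Your construction instead carves the subspace $M$ out of the Ponomarev space over $\bigcup_x\mathcal{P}_x$ by the ``isolating finite intersection or all coordinates in $\mathcal{P}_{g(\alpha)}$'' alternative and re-verifies everything by hand; I checked the key points and they hold: the thinning of $\mathcal{P}_x$ to $\mathcal{N}=\{P\in\mathcal{P}_x:\{x_i\}\ \mathcal{I}\mbox{-eventually in }P\}$ is a network at $x$ by the definition of an $\mathcal{I}$-$cs$-network at $x$ (and, via constant sequences, each normalized $\mathcal{P}_x$ is a network at $x$, so the Ponomarev system is legitimate), the equality $g(B_k\cap M)=\bigcap_{i\le k}R_{z_i}$ works because the tail repeating the index of $\{y\}$ puts $\mu$ into $M$ through the first alternative, Lemma \ref{decreasing network} then applies, fibres are closed by $T_1$, points of $A_x$ are interior to the fibre, and $L_x$ lies in a countable power of a countable discrete index set, hence is separable. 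The trade-off: the paper's argument is shorter because it reuses Corollary \ref{c31} (hence Claims 2--3 of Theorem \ref{main}) as a black box at the cost of passing through the auxiliary non-metrizable sum $Y$; yours avoids any intermediate space and exhibits the boundary of each fibre explicitly inside a second-countable block, at the cost of redoing the $\mathcal{I}$-covering and boundary computations for the trimmed system. Both ultimately rest on Lemma \ref{decreasing network}.
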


\begin{proof}
Necessity. Suppose that $X$ is an $\mathcal{I}$-$csf$-countable $T_1$-space. For each $x\in X$, let $X_{x}$ be the set $X$ endowed with the following topology: a neighborhood base of $x$ in $X_x$ is taken as the neighborhood base of $x$ in the original topology of $X$; every point of $X_x\setminus\{x\}$ is an isolated point. Put $Y=\bigoplus_{x\in X}X_x$. Define a function $h:Y\rightarrow X$ by the natural function, i.e., $h|_{X_x}=\mbox{id}_X$ for each $x\in X$.

\medskip
{\bf Claim 1.}\quad $Y$ has a point-countable $\mathcal{I}$-$cs$-network.

For each $x\in X$, let $\mathscr{P}_x$ be a countable $\mathcal{I}$-$cs$-network at $x$ in $X$. For each $y\in Y$, there exists a unique $x\in X$ such that $y\in X_x$. If $y=x$, let $\mathscr{F}_{y}=\mathscr{P}_{x}$; if $y\neq x$, let {\color{black} $\mathscr{F}_{y}=\{\{y\}\}$}. Put $\mathscr{F}=\bigcup_{y\in Y}\mathscr{F}_{y}$. It is easy to see that $\mathscr{F}$ is a point-countable $\mathcal{I}$-$cs$-network for $Y$.

\medskip
{\bf Claim 2.}\quad $h$ is a continuous and $\mathcal{I}$-covering mapping satisfying $\partial h^{-1}(x)\subseteq\{x\}$ for each $x\in X$.

Obviously, $h$ is continuous. Let $\{x_n\}_{n\in\mathbb N}$ be a sequence $\mathcal{I}$-converging to a point $x$ in $X$. Then the sequence $\{x_n\}_{n\in\mathbb N}$ is also $\mathcal{I}$-converging to $x$ in $X_x$. It is obvious that $x\in h^{-1}(x)\cap X_{x}\subseteq Y$ and each $h(x_n)=x_n$. Thus $h$ is an $\mathcal{I}$-covering mapping. For each $x\in X$ and $y\in X\setminus\{x\}$, since $X$ is a $T_1$-space, the set $h^{-1}(x)\cap X_{y}=\{y\}$ is closed and open in $Y$, and so $\partial h^{-1}(x)\subseteq\{x\}$.

\medskip
Since $Y$ is a $T_1$-space with a point-countable $\mathcal{I}$-$cs$-network, by Corollary \ref{c31}, there are a metric space $M$ and a continuous $\mathcal{I}$-covering $s$-mapping $g:M\to Y$.

\medskip
{\bf Claim 3.}\quad $f=h\circ g:M\rightarrow X$ is a continuous $\mathcal{I}$-covering and boundary $s$-mapping.

It is clear that $f$ is a continuous and $\mathcal{I}$-covering mapping. For each $x\in X$, since $g^{-1}([h^{-1}(x)]^{\circ})$ is open in $M$,
\begin{align*}
\partial f^{-1}(x)&=\partial (g^{-1}(h^{-1}(x)))\\
&=g^{-1}(h^{-1}(x))\setminus [g^{-1}(h^{-1}(x))]^{\circ}\\
&\subseteq g^{-1}(h^{-1}(x))\setminus g^{-1}([h^{-1}(x)]^{\circ})=g^{-1}(\partial h^{-1}(x)).
\end{align*}
By Claim 2, the set $\partial f^{-1}(x)$ is a separable set in $M$. So $f$ is a boundary $s$-mapping.

\medskip
Sufficiency. Suppose that there are a metric space $M$ and a continuous $\mathcal{I}$-covering boundary $s$-mapping $f:M\to X$. Let $\mathscr{B}$ be a point-countable base for $M$. If $x\in X$ and $\{x\}$ is not open in $X$, then $\partial f^{-1}(x)\neq\emptyset$, and pick $m_x\in\partial f^{-1}(x)$. Put
$$\mathscr{P}_x=\{f(B): B\in\mathscr{B}\mbox{~and~}B\cap\partial f^{-1}(x)\neq \varnothing\}.$$
Since the set $\partial f^{-1}(x)$ is separable, the family $\mathscr{P}_x$ is countable. Let $\{x_{i}\}_{i\in\mathbb{N}}$ be a sequence in $X$, $\mathcal{I}$-converging to the point $x$ and $x\in U\in \tau_{X}$. If there is $I\in\mathcal{I}$ such that $x_i=x$ for each $i\in\mathbb N\setminus I$, we take $B\in\mathcal{B}$ with $m_x\in B\subseteq f^{-1}(U)$, then $f(B)\in\mathscr{P}_x$ and $\{x\}\cup\{x_{i}:i\in\mathbb N\setminus I\}=\{x\}\subseteq f(B)\subseteq U$. If there is no $I\in\mathcal{I}$ such that $x_i=x$ for each $i\in\mathbb N\setminus I$, since $f$ is $\mathcal{I}$-covering, there is a sequence $\{y_{i}\}_{i\in\mathbb{N}}$ in $M$, $\mathcal{I}$-converging to a point $y\in f^{-1}(x)$ with each $y_{i}\in f^{-1}(x_{i})$. Then $y\in\partial f^{-1}(x)$. Otherwise, $y\in [f^{-1}(x)]^\circ$, thus there is $J\in\mathcal{I}$ such that $\{y_i: i\in\mathbb N\setminus J\}\subseteq [f^{-1}(x)]^\circ$, so $x_i=x$ for each $i\in\mathbb N\setminus J$, which is a contradiction. This means that $y\in\partial f^{-1}(x)$. Then $y\in f^{-1}(x)\subseteq f^{-1}(U)\in\tau_M$, and there exists $B\in\mathscr{B}$ such that $y\in B\subseteq f^{-1}(U)$. As a consequence, $y\in B\cap\partial f^{-1}(x)$ and there is $I\in\mathcal{I}$ such that $\{y_{i}:i\in\mathbb N\setminus I\}\subseteq B$. It follows that $f(B)\in\mathscr{P}_x$ and $\{x\}\cup\{x_{i}:i\in\mathbb N\setminus I\}\subseteq f(B)\subseteq U$. Therefore, $\mathscr{P}_x$ is a countable $\mathcal{I}$-$cs$-network at $x$ in $X$.
\end{proof}

\begin{theorem}
Every $\mathcal{I}$-$csf$-countable space is a $csf$-countable space.
\end{theorem}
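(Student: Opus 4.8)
The plan is to reduce everything to the following pointwise assertion: \emph{if a point $x$ of a space $X$ has a countable $\mathcal I$-$cs$-network, then it has a countable $\mathcal I_{fin}$-$cs$-network (i.e.\ a countable $cs$-network)}. Granting this, an $\mathcal I$-$csf$-countable space, which by definition has a countable $\mathcal I$-$cs$-network at every point, has a countable $cs$-network at every point, i.e.\ is $csf$-countable. So I fix $x$ together with a countable $\mathcal I$-$cs$-network $\mathcal P_{0}$ at $x$, and I let $\mathcal P$ be the family obtained from $\mathcal P_{0}$ by first deleting the members that do not contain $x$ and then adjoining all finite unions and all finite intersections of the survivors. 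Adjoining sets to an $\mathcal I$-$cs$-network at $x$ leaves it an $\mathcal I$-$cs$-network at $x$ (Definition \ref{d1} asks only for \emph{some} member of the family), and deleting members that omit $x$ costs nothing because in that definition any admissible $P$ already satisfies $\{x\}\subseteq P$; so $\mathcal P$ is a countable $\mathcal I$-$cs$-network at $x$, each of its members contains $x$, and it is closed under finite unions and finite intersections. I will show that $\mathcal P$ itself is an $\mathcal I_{fin}$-$cs$-network at $x$.

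Next I would take an arbitrary sequence $\{x_{n}\}$ converging to $x$ in the ordinary sense and an arbitrary open $U\ni x$; since $\mathcal I$ is admissible, $\{x_{n}\}\xrightarrow{_{\mathcal I}}x$ as well, and I must produce $P\in\mathcal P$ with $x\in P\subseteq U$ and $\{n:x_{n}\notin P\}$ \emph{finite}. Put $\mathcal G=\{P\in\mathcal P: x\in P\subseteq U\}$; this is countable, closed under finite unions and intersections, and nonempty (apply the defining property to the constant sequence at $x$ and to $U$). Suppose toward a contradiction that $E_{P}:=\{n:x_{n}\notin P\}$ is infinite for every $P\in\mathcal G$. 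The crucial point is that, $\mathcal G$ being closed under finite \emph{unions}, the family $\{E_{P}:P\in\mathcal G\}$ is closed under finite \emph{intersections}, since $E_{P_{1}}\cap\dots\cap E_{P_{k}}=E_{P_{1}\cup\dots\cup P_{k}}$ and $P_{1}\cup\dots\cup P_{k}\in\mathcal G$; hence this is a countable, downward-directed family of infinite sets. Enumerating $\mathcal G=\{P_{k}\}$ and writing $\widehat E_{k}=E_{P_{1}}\cap\dots\cap E_{P_{k}}$ (a decreasing chain of infinite sets), I pick $a_{1}<a_{2}<\dots$ with $a_{k}\in\widehat E_{k}$ and set $A=\{a_{k}:k\in\mathbb N\}$; then $A\setminus E_{P}$ is finite for every $P\in\mathcal G$ (if $P=P_{m}$, then $a_{k}\in\widehat E_{k}\subseteq E_{P_{m}}$ for all $k\ge m$).

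Now I form the subsequence $z_{m}:=x_{a_{m}}$. It still converges to $x$, hence $\mathcal I$-converges to $x$; but for every $P\in\mathcal G$ the set $\{m:z_{m}\notin P\}=\{m:a_{m}\in E_{P}\}$ is \emph{cofinite} in $\mathbb N$ because $A\setminus E_{P}$ is finite, and a cofinite subset of $\mathbb N$ is not in $\mathcal I$ (else $\mathbb N\in\mathcal I$). Applying to the sequence $\{z_{m}\}$ and the open set $U$ the fact that $\mathcal P$ is an $\mathcal I$-$cs$-network at $x$ gives $P\in\mathcal P$ and $J\in\mathcal I$ with $\{x\}\cup\{z_{m}:m\notin J\}\subseteq P\subseteq U$; then $P\in\mathcal G$ while $\{m:z_{m}\notin P\}\subseteq J\in\mathcal I$ --- a contradiction. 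Therefore some $P\in\mathcal G$ has $\{n:x_{n}\notin P\}$ finite, which is exactly the $\mathcal I_{fin}$-$cs$-network condition at $x$ for $\{x_{n}\}$ and $U$; as $\{x_{n}\}$ and $U$ were arbitrary, $\mathcal P$ is a countable $cs$-network at $x$, and the theorem follows.

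I expect the only genuinely non-obvious move to be closing the given network under finite \emph{unions} rather than (or in addition to) finite intersections, since that is precisely what forces the complementary index sets $E_{P}$ to be downward-directed and hence to possess the finite intersection property; after that, the diagonal construction of the pseudo-intersection $A$ and the passage to the reindexed subsequence $\{z_{m}\}$ are routine. The one subtlety worth flagging is that the largeness that matters is measured in the index set of $\{z_{m}\}$: the set $A$ itself will in general lie in $\mathcal I$, yet $\{m:z_{m}\notin P\}$ is cofinite in $\mathbb N$, which is all that is needed to defeat $\mathcal I$-eventual containment.
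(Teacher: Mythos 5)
Your proof is correct and follows essentially the same route as the paper: close the countable $\mathcal{I}$-$cs$-network at $x$ under finite unions, assume the $cs$-network condition fails for a convergent sequence and an open set, diagonalize to a subsequence that eventually escapes every candidate member contained in the open set, and then apply the $\mathcal{I}$-$cs$-network property to that (still $\mathcal{I}$-convergent) subsequence to contradict the fact that a cofinite set cannot lie in the admissible non-trivial ideal $\mathcal{I}$. Your $E_P$/pseudo-intersection formulation is just a rephrasing of the paper's choice of $x_{n_k}\notin\bigcup_{i\le k}F_i$, so no substantive difference.
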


\begin{proof}
Let $X$ be an $\mathcal{I}$-$csf$-countable space. For each $x\in X$, let $\mathscr{P}_{x}$ be a countable $\mathcal{I}$-$cs$-network at $x$ in $X$. Put $\mathscr{F}_{x}=\{\bigcup\mathscr{P}^{\prime}_{x}:\mathscr{P}^{\prime}_{x}\subseteq\mathscr{P}_{x}\mbox{~and~} |\mathcal{P}'_x|<\omega\}$. Then $\mathscr{F}_{x}$ is countable. We will show that $\mathscr{F}_x$ is a $cs$-network at $x$ in $X$. Suppose that a sequence $\{x_n\}_{n\in\mathbb N}$ converges to the point $x\in V$ with $V$ open in $X$. Put $\{F\in\mathscr{F}_x: F\subseteq V\}=\{F_i\}_{i\in\mathbb N}$. Then there exists  $k\in\mathbb N$ such that the sequence $\{x_n\}_{n\in\mathbb N}$ is eventually in $\bigcup_{i\leqslant k}F_i$. Otherwise, there exists a subsequence $\{x_{n_k}\}_{k\in\mathbb N}$ of the sequence $\{x_n\}_{n\in\mathbb N}$ such that each $x_{n_k}\in X\setminus\bigcup_{i\leqslant k}F_i$. Since the subsequence $\{x_{n_k}\}_{k\in\mathbb N}$ converges to $x$ and $\mathscr{P}_x$ is an $\mathcal{I}$-$cs$-network at $x$, there are $I\in\mathcal{I}$ and $P\in\mathcal{P}_x$ such that $\{x_{n_k}: k\in\mathbb N\setminus I\}\subseteq P\subseteq V$. By $P\in\mathscr{F}_x$, we have $P=F_m$ for some $m\in\mathbb N$. Since $\mathbb N\setminus I$ is infinite, there is $k_0\in\mathbb N\setminus I$ with $k_0\geq m$, thus $x_{n_{k_0}}\not\in F_m=P$, which is a contradiction. Therefore, $X$ is a $csf$-countable space.
\end{proof}

\section{Several applications}

In this section, we discuss the preliminary applications of the main theorems and put forward several related questions.

Let $\mathcal{I}$ be an ideal on the set $\mathbb N$. A subset $P$ of a topological space $X$ is said to be an {\it$\mathcal{I}_{sn}$-open set} of $X$ provided each sequence in $X$ $\mathcal{I}$-converging to a point $x\in P$ is $\mathcal{I}$-eventually in $P$ \cite[P. 1982]{Lin2020}. We have that open subsets $\Longrightarrow\mathcal{I}_{sn}$-open subsets $\Longrightarrow\mathcal{I}$-open subsets $\Longrightarrow$ sequentially open subsets in a topological space \cite[Lemma 2.1]{Lin2020}. Here, $\mathcal{I}_{fin}$-open subsets are called {\it sequentially open}.

\begin{definition}
Let $\mathcal{I}$ be an ideal on $\mathbb N$. A topological space $X$ is called an {\it $\mathcal{I}$-FU-space} provided $A\subseteq X$ and $x\in\overline{A}$ there is a sequence $\{x_n\}_{n\in\mathbb N}$ in $A$ with $x_{n}\xrightarrow{_{\mathcal{I}}}x$ in $X$ \cite[P. 90]{RePr16}; $X$ is called an {\it $\mathcal{I}$-sequential space} if each $\mathcal{I}$-open subset of $X$ is open~\cite[Definition 2.3]{Pa14}; $X$ is called an {\it $\mathcal{I}$-neighborhood space} if each $\mathcal{I}$-open subset of $X$ is $\mathcal{I}_{sn}$-open~\cite[Definition 3.1]{Lin2020}.
\end{definition}

An $\mathcal{I}_{fin}$-FU-space is called a {\it Fr\'echet-Urysohn space} \cite[Definition 1.2.7]{LsYzq16}; an $\mathcal{I}_{fin}$-sequential space is called a {\it sequential space} \cite[Definition 1.6.15]{LsYzq16}; every topological space is an $\mathcal{I}_{fin}$-neighborhood space \cite[Example 3.11]{Lin2020}. It is easy to check that first-countable spaces $\Longrightarrow$ Fr\'echet-Urysohn spaces $\Longrightarrow\mathcal{I}$-FU-spaces $\Longrightarrow\mathcal{I}$-sequential spaces $\Longrightarrow \mathcal{I}$-neighborhood spaces~\cite[Lemma 3.4]{Lin2020}; and Fr\'echet-Urysohn spaces $\Longrightarrow$ sequential spaces $\Longrightarrow\mathcal{I}$-sequential spaces~\cite[Lemma 2.5]{Lin2020}.

\begin{corollary}
Let $\mathcal{I}$ be an ideal on $\mathbb{N}$. Then each space of $\mathcal{I}$-$csf$-networks is an $\mathcal{I}$-neighborhood space.
\end{corollary}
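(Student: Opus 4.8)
The plan is to reduce everything to a metric source via Theorem \ref{main}. By Theorem \ref{main}, since $X$ has an $\mathcal{I}$-$csf$-network there are a metric space $M$ and a continuous $\mathcal{I}$-covering surjection $f\colon M\to X$. Being metrizable, $M$ is first-countable, hence by \cite[Lemma 3.4]{Lin2020} (first-countable $\Longrightarrow$ Fr\'echet--Urysohn $\Longrightarrow$ $\mathcal{I}$-FU $\Longrightarrow$ $\mathcal{I}$-sequential $\Longrightarrow$ $\mathcal{I}$-neighborhood) the space $M$ is an $\mathcal{I}$-neighborhood space. It then suffices to prove that every $\mathcal{I}$-open subset $U$ of $X$ is $\mathcal{I}_{sn}$-open.

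First I would verify that $f^{-1}(U)$ is $\mathcal{I}$-open in $M$. Here $X\setminus U$ is $\mathcal{I}$-closed and $f^{-1}(X\setminus U)=M\setminus f^{-1}(U)$; if $\{a_k\}_{k\in\mathbb N}\subseteq M\setminus f^{-1}(U)$ with $a_k\xrightarrow{_{\mathcal{I}}}a$ in $M$, then, since continuous maps preserve $\mathcal{I}$-convergence, $f(a_k)\xrightarrow{_{\mathcal{I}}}f(a)$ with each $f(a_k)\in X\setminus U$, so $f(a)\in X\setminus U$ by $\mathcal{I}$-closedness, i.e.\ $a\notin f^{-1}(U)$. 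Thus $M\setminus f^{-1}(U)$ is $\mathcal{I}$-closed, so $f^{-1}(U)$ is $\mathcal{I}$-open, and as $M$ is an $\mathcal{I}$-neighborhood space, $f^{-1}(U)$ is $\mathcal{I}_{sn}$-open in $M$.

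Next, fixing $x\in U$ and a sequence $\{x_n\}_{n\in\mathbb N}$ in $X$ with $x_n\xrightarrow{_{\mathcal{I}}}x$, I would use that $f$ is $\mathcal{I}$-covering to pick $\{m_n\}_{n\in\mathbb N}$ in $M$ with $m_n\xrightarrow{_{\mathcal{I}}}m\in f^{-1}(x)$ and each $m_n\in f^{-1}(x_n)$. Since $m\in f^{-1}(x)\subseteq f^{-1}(U)$ and $f^{-1}(U)$ is $\mathcal{I}_{sn}$-open, the sequence $\{m_n\}_{n\in\mathbb N}$ is $\mathcal{I}$-eventually in $f^{-1}(U)$; because $f(m_n)=x_n$, we have $m_n\in f^{-1}(U)\iff x_n\in U$, so $\{n\in\mathbb N: x_n\notin U\}=\{n\in\mathbb N: m_n\notin f^{-1}(U)\}\in\mathcal{I}$. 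Hence $\{x_n\}_{n\in\mathbb N}$ is $\mathcal{I}$-eventually in $U$, $U$ is $\mathcal{I}_{sn}$-open, and $X$ is an $\mathcal{I}$-neighborhood space.

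I do not expect a genuine obstacle: the substance is already packaged into Theorem \ref{main}, and the only subtlety worth flagging is that $U$ is merely $\mathcal{I}$-open (not open), so one cannot feed it directly into the definition of an $\mathcal{I}$-$csf$-network at a point; pulling back along $f$ to $M$, where $\mathcal{I}$-open subsets are automatically $\mathcal{I}_{sn}$-open, is exactly what sidesteps this.
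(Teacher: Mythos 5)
Your proposal is correct and follows essentially the same route as the paper: invoke Theorem \ref{main} to get a continuous $\mathcal{I}$-covering map $f:M\to X$ from a metric space, show $f^{-1}(U)$ is $\mathcal{I}$-open in $M$ by pulling back $\mathcal{I}$-closedness through continuity, and then lift an $\mathcal{I}$-convergent sequence via the $\mathcal{I}$-covering property to conclude $U$ is $\mathcal{I}_{sn}$-open. The only (harmless) difference is that the paper upgrades $f^{-1}(U)$ all the way to an open set using that a metric space is sequential, whereas you only upgrade it to $\mathcal{I}_{sn}$-open via $M$ being an $\mathcal{I}$-neighborhood space; both suffice for the final eventuality argument.
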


\begin{proof}
Let $X$ be a space with an $\mathcal{I}$-$csf$-network. By Theorem \ref{main}, there are a metric space $M$ and a continuous and $\mathcal{I}$-covering mapping $f:M\to X$. Let $U$ be an $\mathcal{I}$-open set in $X$. Then $f^{-1}(U)$ is $\mathcal{I}$-open in $M$. In fact, let $\{z_{n}\}_{n\in\mathbb N}$ be a sequence in $M\setminus f^{-1}(U)$ with $z_{n} \xrightarrow {_{\mathcal{I}}} z\in M$. Since $f$ is continuous, $f$ preserves $\mathcal{I}$-convergence. Thus, we have $f(z_{n}) \xrightarrow {_{\mathcal{I}}} f(z)$. Since the set $X\setminus U$ is $\mathcal{I}$-closed in $X$ and each $f(z_{n})\in X\setminus U$, therefore $f(z)\in X\setminus U$, i.e., $z\in M\setminus f^{-1}(U)$. Hence $M\setminus f^{-1}(U)$ is $\mathcal{I}$-closed in $X$, i.e., the set $f^{-1}(U)$ is $\mathcal{I}$-open in $M$, thus $f^{-1}(U)$ is open in $M$, because $M$ is a metric space.

Next, we show that $U$ is an $\mathcal{I}_{sn}$-open subset of $X$. Let $\{x_{n}\}_{n\in\mathbb{N}}$ be a sequence in $X$ with $x_{n} \xrightarrow {_{\mathcal{I}}} x\in U$. Since $f$ is $\mathcal{I}$-covering, there exists a sequence $\{z_{n}\}_{n\in\mathbb{N}}$ in $M$ satisfying $z_{n}\xrightarrow{_{\mathcal{I}}} z\in f^{-1}(x)$ and each $z_{n}\in f^{-1}(x_{n})$. By $z_{n} \xrightarrow {_{\mathcal{I}}} z\in f^{-1}(U)$, $\{n\in\mathbb{N}: x_{n}\notin U\}=\{n\in\mathbb{N}: z_{n}\notin f^{-1}(U)\}\in \mathcal{I}$, therefore the sequence $\{x_n\}_{n\in\mathbb N}$ is $\mathcal{I}$-eventually in $U$. This implies the set $U$ is an $\mathcal{I}_{sn}$-open subset of $X$. Thus, $X$ is an $\mathcal{I}$-neighborhood space.
\end{proof}

Let $f:X\to Y$ be a mapping. $f$ is called {\it quotient} provided $f$ is surjective and a subset $U$ of $Y$ is open if and only if $f^{-1}(U)$ is open in $X$ \cite[Definition 2.1.1]{LsYzq16}; $f$ is called {\it pseudo-open} provided $y\in Y$ and $f^{-1}(y)\subseteq U$ with $U$ open in $X$, then $f(U)$ is a neighborhood of $y$ in $Y$ \cite[Definition 1]{Ar63}. It is known that every continuous and pseudo-open mapping is quotient.

\begin{corollary}\label{c52}
Let $\mathcal{I}$ be an ideal on $\mathbb{N}$. The following are equivalent for a space $X$.

$(1)$\, $X$ is a sequential space of $\mathcal{I}$-$csf$-networks.

$(2)$\, $X$ is an $\mathcal{I}$-sequential space of $\mathcal{I}$-$csf$-networks.

$(3)$\, $X$ is an $\mathcal{I}$-covering and quotient image of a metric space.
\end{corollary}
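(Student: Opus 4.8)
The plan is to prove the three-way equivalence by establishing a cycle of implications, $(1)\Rightarrow(2)\Rightarrow(3)\Rightarrow(1)$, exploiting Theorem \ref{main} together with standard facts relating quotient, pseudo-open, and $\mathcal{I}$-covering mappings. The implication $(1)\Rightarrow(2)$ is immediate, since every sequential space is an $\mathcal{I}$-sequential space (as recorded in the excerpt right after the definition of $\mathcal{I}$-sequential spaces), and the $\mathcal{I}$-$csf$-network hypothesis is unchanged.

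For $(2)\Rightarrow(3)$, I would start from Theorem \ref{main}: since $X$ has an $\mathcal{I}$-$csf$-network, there are a metric space $M$ and a continuous $\mathcal{I}$-covering mapping $f:M\to X$. It remains to upgrade $f$ to a quotient mapping. Here I would use that $X$ is an $\mathcal{I}$-sequential space: if $U\subseteq X$ has $f^{-1}(U)$ open in $M$, then I claim $U$ is $\mathcal{I}$-open in $X$, hence open by $\mathcal{I}$-sequentiality. To check $\mathcal{I}$-openness, take a sequence $\{x_n\}$ in $X\setminus U$ with $x_n\xrightarrow{_{\mathcal I}}x$; by the $\mathcal{I}$-covering property lift it to $\{z_n\}$ in $M$ with $z_n\xrightarrow{_{\mathcal I}}z\in f^{-1}(x)$ and $z_n\in f^{-1}(x_n)$; since each $z_n\notin f^{-1}(U)$ and $f^{-1}(U)$ is open (hence $\mathcal{I}$-closed complement), $z\notin f^{-1}(U)$, so $x\notin U$. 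Thus $X\setminus U$ is $\mathcal{I}$-closed, $U$ is $\mathcal{I}$-open, and therefore open; so $f$ is quotient, giving $(3)$.

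For $(3)\Rightarrow(1)$, suppose $f:M\to X$ is continuous, $\mathcal{I}$-covering, and quotient, with $M$ metric. By Theorem \ref{main} (or Lemma \ref{I-covering-pre} applied to a base of $M$), $X$ has an $\mathcal{I}$-$csf$-network, so it suffices to show $X$ is sequential. Since $M$ is metric it is sequential, and it is a classical fact that a quotient image of a sequential space is sequential: if $A\subseteq X$ is sequentially closed but not closed, then $f^{-1}(A)$ is not closed in $M$, so (as $M$ is sequential, indeed first-countable) there is a convergent sequence $\{z_n\}$ in $f^{-1}(A)$ with limit $z\notin f^{-1}(A)$; then $\{f(z_n)\}\subseteq A$ converges to $f(z)\notin A$ by continuity, contradicting sequential closedness of $A$. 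Hence $X$ is sequential, establishing $(1)$.

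The only genuinely nontrivial link in the cycle is $(2)\Rightarrow(3)$, and within it the step showing that the $\mathcal{I}$-covering mapping $f$ furnished by Theorem \ref{main} is quotient; everything else is either a direct citation of the implications among $\mathcal{I}$-type convergence properties listed in Section 5 or the textbook fact that quotients preserve sequentiality. The main obstacle is therefore to verify cleanly that $\mathcal{I}$-covering plus continuity forces the preimage-open sets to be $\mathcal{I}$-open in $X$; the lifting argument above handles this, using only that $f^{-1}(U)$ open in a metric space implies its complement is $\mathcal{I}$-closed (indeed closed) and that $\mathcal{I}$-convergence is preserved under $f$ (continuity) and reflected along the lift ($\mathcal{I}$-covering).
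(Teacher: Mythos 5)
Your proposal is correct and follows essentially the same route as the paper: both directions hinge on Theorem \ref{main}, with (2)$\Rightarrow$(3) proved by lifting an $\mathcal{I}$-convergent sequence through the $\mathcal{I}$-covering map to show that sets with open preimage are $\mathcal{I}$-open (hence open by $\mathcal{I}$-sequentiality), and (3)$\Rightarrow$(1) using that quotient maps preserve sequentiality. The only differences are cosmetic: you verify $\mathcal{I}$-openness by checking $\mathcal{I}$-closedness of the complement rather than $\mathcal{I}$-eventual containment, and you prove the quotient-preserves-sequential fact directly where the paper cites it.
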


\begin{proof}
Since every sequential space is preserved by a quotient mapping \cite[Proposition 2.3.1]{LsYzq16}, by Theorem \ref{main}, we have that $(3)\Rightarrow (1)$. It is obvious that $(1)\Rightarrow (2)$. Next, we show that $(2)\Rightarrow (3)$. Let $X$ be an $\mathcal{I}$-sequential space of $\mathcal{I}$-$csf$-networks. By Theorem \ref{main}, there are a metric space $M$ and a continuous and  $\mathcal{I}$-covering mapping $f:M\to X$. Suppose that $U\subseteq X$ and $f^{-1}(U)$ is open in $M$. If a sequence $x_n\xrightarrow {_{\mathcal{I}}} x\in U$ in $X$, then there is a sequence $z_n\xrightarrow {_{\mathcal{I}}} z\in f^{-1}(x)$ in $M$ with each $z_n\in f^{-1}(x_n)$. Since $z\in f^{-1}(U)$, the set $\{n\in\mathbb N: x_n\not\in U\}=\{n\in\mathbb N: z_n\not\in f^{-1}(U)\}\in\mathcal{I}$, i.e., $U$ is $\mathcal{I}$-open in the $\mathcal{I}$-sequential space $X$, thus $U$ is open. Therefore, $f$ is a quotient mapping.
\end{proof}

Similarly, we have the following corollary. In its proof, the following results are used: (a)\, every Fr\'echet-Urysohn space is preserved by a continuous and pseudo-open mapping \cite[Proposition 2.3.1]{Ls15}; (b)\, every $\mathcal{I}$-covering mapping onto an $\mathcal{I}$-FU-space is pseudo-open \cite[Theorem 6.7]{cbb12}.

\begin{corollary}\label{c53}
Let $\mathcal{I}$ be an ideal on $\mathbb{N}$. The following are equivalent for a space $X$.

$(1)$\, $X$ is a Fr\'echet-Urysohn space of $\mathcal{I}$-$csf$-networks.

$(2)$\, $X$ is an $\mathcal{I}$-FU-space of $\mathcal{I}$-$csf$-networks.

$(3)$\, $X$ is a continuous, $\mathcal{I}$-covering and pseudo-open image of a metric space.
\end{corollary}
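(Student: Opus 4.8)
The plan is to mirror the cyclic argument used for Corollary \ref{c52}, replacing ``sequential / $\mathcal{I}$-sequential'' by ``Fr\'echet-Urysohn / $\mathcal{I}$-FU'' and ``quotient'' by ``pseudo-open'', and feeding in the two auxiliary facts (a) and (b) recalled just before the statement together with Theorem \ref{main}. Concretely, I would prove the implications $(3)\Rightarrow(1)\Rightarrow(2)\Rightarrow(3)$.

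For $(3)\Rightarrow(1)$: assume $X=f(M)$ for a metric space $M$ and a continuous, $\mathcal{I}$-covering, pseudo-open surjection $f$. Theorem \ref{main} immediately gives that $X$ has an $\mathcal{I}$-$csf$-network. Since $M$ is metric it is first-countable, hence Fr\'echet-Urysohn, so by fact (a) the continuous pseudo-open image $X$ is Fr\'echet-Urysohn; this is $(1)$. For $(1)\Rightarrow(2)$: this is immediate, since every Fr\'echet-Urysohn space is an $\mathcal{I}$-FU-space by \cite[Lemma 3.4]{Lin2020} (recorded in Section 5 of the excerpt), and the $\mathcal{I}$-$csf$-network hypothesis is unchanged. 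For $(2)\Rightarrow(3)$: let $X$ be an $\mathcal{I}$-FU-space carrying an $\mathcal{I}$-$csf$-network; by Theorem \ref{main} there exist a metric space $M$ and a continuous, $\mathcal{I}$-covering surjection $f:M\to X$, and by fact (b) any $\mathcal{I}$-covering mapping onto an $\mathcal{I}$-FU-space is pseudo-open, so $f$ is pseudo-open and $(3)$ follows.

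The argument is essentially an assembly of Theorem \ref{main} with the two cited facts, so I do not expect a genuine obstacle. The only point meriting a line of care is in $(2)\Rightarrow(3)$: one must check that the hypothesis of fact (b) is exactly ``the target is an $\mathcal{I}$-FU-space'' (it is), so that the $\mathcal{I}$-covering mapping furnished by Theorem \ref{main} is automatically pseudo-open, with no modification of $f$ needed; and in $(3)\Rightarrow(1)$ one should not forget to invoke first-countability of $M$ to start fact (a). Everything else is routine.
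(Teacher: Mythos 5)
Your proposal is correct and follows essentially the same route the paper intends: it mirrors the cyclic proof of Corollary \ref{c52}, using Theorem \ref{main} together with fact (a) for $(3)\Rightarrow(1)$ and fact (b) for $(2)\Rightarrow(3)$, with $(1)\Rightarrow(2)$ immediate since every Fr\'echet--Urysohn space is an $\mathcal{I}$-FU-space. No gaps.
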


Statistical convergence is a special ideal convergence \cite{cbb12}. Corollaries \ref{c52} and \ref{c53} partially answer the following questions, which were posed by Z.B Tang and F.C. Lin in \cite[Questions 2.1 and 3.1]{TangL}:

\medskip
(1)\, How to characterize $s$-sequential spaces (i.e., statistical sequential spaces) as the images of metric spaces under some continuous mappings?

(2)\, How to characterize statistical FU-spaces as the images of metric spaces under some continuous mappings?

\begin{example}\label{ex}
Every space has a $csf$-network. But, there are an ideal $\mathcal{I}$ on $\mathbb N$ and an $\mathcal{I}$-FU-space $X$ which has no $\mathcal{I}$-$csf$-network.
\end{example}

\begin{proof}
First, we show that every space has a $csf$-network. Let $X$ be a topological space and $x\in X$. If $\{x_n\}_{n\in\mathbb N}$ is a sequence with $x_n\to x$ in $X$. Put $\mathcal{P}_x=\{\{x\}\cup\{x_n: n\geq k\}: k\in\mathbb N\}$. Then $\mathcal{P}_x$ is countable. If $x\in U$ with $U$ open in $X$, then there exist  $I\in\mathcal{I}_{fin}$ and $k\in\mathbb N$ such that $\{x\}\cup\{x_n: n\in\mathbb N\setminus I\}=\{x\}\cup\{x_n: n\geq k\}\subseteq U$. Thus, $X$ has a $csf$-network.

Let $\mathcal{I}$ be a maximal ideal on $\mathbb N$. $\Sigma(\mathcal{I})$ is the set $\mathbb N\cup\{\infty\}$, $\infty\not\in\mathbb N$, equipped with the following topology: (a)\, each point $n\in\mathbb N$ is isolated; (b)\, each open neighborhood $U$ of $\infty$ is of the form $(\mathbb N\setminus I)\cup\{\infty\}$, for each $I\in\mathcal{I}$.

By \cite[Example 3.17]{Lin2020}, the space $\Sigma(\mathcal{I})$ is an $\mathcal{I}$-FU-space having no non-trivial convergent sequence. Since the point $\infty$ is non-isolated, $\Sigma(\mathcal{I})$ is not a sequential space. By Corollary \ref{c52}, $\Sigma(\mathcal{I})$ has no $\mathcal{I}$-$csf$-network.
\end{proof}

Let $X$ be a non-sequential space. For example, take $X=[0, \omega_1]$ with the usual ordered topology. Then $X$ is a non-$\mathcal{I}_{fin}$-sequential space
having an $\mathcal{I}_{fin}$-$csf$-network by Example \ref{ex}.

\begin{question}
Is there a Fr\'echet-Urysohn space having no $\mathcal{I}$-$csf$-network?
\end{question}

It is known that metrizability is preserved by continuous, closed and sequence-covering mapping \cite[Corollary 3.5.12]{Ls15}. V. Renukadevi and B. Prakash defined the statistically sequence covering map as follows \cite{RePr17}: a mapping $f:X\to Y$ is a {\it statistically sequence covering map} if whenever a sequence $\{y_n\}_{n\in\mathbb N}$ converges to a point $y$ in $Y$, there is a sequence $\{x_n\}_{n\in\mathbb N}$ statistically converging to a point $x$ in $X$ with each $x_n\in f^{-1}(y_n)$ and $x\in f^{-1}(y)$. It is proved that every continuous, closed and statistically sequence covering image of a metric space is metrizable \cite[Corollary 3.4]{RePr17}.

\begin{question}
Is metrizability preserved by continuous, closed and $\mathcal{I}$-covering mappings?
\end{question}

It is known that a topological space is a sequentially connected space if and only if it is a continuous sequence-covering image of a connected metric space \cite[Theorem 2.3.17]{LsYzq16}.

\begin{question}
How to characterize the spaces as the continuous $\mathcal{I}$-covering images of connected metric spaces?
\end{question}

\end{document}